\newcommand\bB{{\mathbf B}}
\newcommand\bG{{\mathbf G}}
\newcommand\bH{{\mathbf H}}
\newcommand\bI{{\mathbf I}}
\newcommand\bS{{\mathbf S}}
\newcommand\bT{{\mathbf T}}
\newcommand\bU{{\mathbf U}}
\newcommand\bW{{\mathbf W}}
\newcommand\ba{{\mathbf a}}
\newcommand\bb{{\mathbf b}}
\newcommand\bc{{\mathbf c}}
\newcommand\bi{{\mathbf i}}
\newcommand\bj{{\mathbf j}}
\newcommand\bn{{\mathbf n}}
\newcommand\bp{{\mathbf p}}
\newcommand\bs{{\mathbf s}}
\newcommand\bt{{\mathbf t}}
\newcommand\bv{{\mathbf v}}
\newcommand\bw{{\mathbf w}}
\newcommand\BF{{\mathbb F}}
\newcommand\BQ{{\mathbb Q}}
\newcommand\BZ{{\mathbb Z}}
\newcommand\CA{{\mathcal A}}
\newcommand\CB{{\mathcal B}}
\newcommand\CC{{\mathcal C}}
\newcommand\etc{etc\dots}
\newcommand\fS{{\mathfrak S}}
\DeclareMathOperator\ch{\mathrm{char}}
\DeclareMathOperator\Id{\mathrm{Id}}
\DeclareMathOperator\Sl{\mathrm{SL}}
\DeclareMathOperator\GL{\mathrm{GL}}
\DeclareMathOperator\Sp{\mathrm{Sp}}
\DeclareMathOperator\Spin{\mathrm{Spin}}
\DeclareMathOperator\diag{\mathrm{diag}}
\newcommand\Gsc{{\bG_{\mathrm{sc}}}}
\newcommand\Tsc{\bT_{\mathrm{sc}}}
\newcommand\Tad{\bT_{\mathrm{ad}}}
\newcommand\Fq{\BF_q}
\newcommand\lexp[2]{\kern\scriptspace\vphantom{#2}^{#1}\kern-\scriptspace#2}
\newcommand\ts{{\boldsymbol\sigma}}
\newcommand\tpi{{\tilde{\boldsymbol\pi}}}
\newcommand\bpi{{\boldsymbol\pi}}
\newcommand\inv{^{-1}}
\newtheorem{proposition}[equation]{Proposition}
\newtheorem{corollary}[equation]{Corollary}
\newtheorem{lemma}[equation]{Lemma}
\newtheorem{definition}[equation]{Definition}
\newtheorem{theorem}[equation]{Theorem}
\numberwithin{equation}{section}
\theoremstyle{remark}
\newtheorem{remark}[equation]{Remark}
\newcommand\Chevie{\texttt{Chevie}}
\subjclass[2020]{ 20G15, 20G40}
\title[Centralisers of semisimple elements]{Centralisers of semisimple elements are
semidirect products}
\author{Fran\c cois Digne and Jean Michel}
\address[F.~Digne]{LAMFA, CNRS UMR 7352, Universit\'e de Picardie-Jules Verne,
F-80039 Amiens, France.}
\email{digne@u-picardie.fr}
\urladdr{www.lamfa.u-picardie.fr/digne}
\address[J.~Michel]{Universit\'e Paris Cit\'e, Sorbonne Universit\'e,
CNRS, IMJ-PRG, F-75013 Paris, France.}
\email{jean.michel@imj-prg.fr}
\urladdr{webusers.imj-prg.fr/$\sim$jean.michel}
\keywords{reductive groups, centralisers, semisimple elements}
\begin{document}
\begin{abstract}
Let $\bG$ be a connected reductive algebraic group over an algebraically closed
field,  and  let  $s\in\bG$  be  a  semisimple  element.  We  show that the
centraliser of $s$ is the semidirect product of its identity component by
its group of components.

We  then look at the case where  $\bG$ is defined over an algebraic closure
of a finite field $\Fq$, and $F$ is an endomorphism such that some power
is a Frobenius endomorphism attached to an $\Fq$-structure  on $\bG$. 
We show that if the centraliser of $s$ is $F$-stable
we have a semidirect product decomposition of its $F$-fixed points.
\end{abstract}
\maketitle
\section{Introduction}
Let $\bG$ be a connected reductive algebraic group over an algebraically closed field $k$, 
and let $s\in\bG$ be a semisimple element. We define $A_\bG(s)$ by the
exact sequence $$1\to C_\bG(s)^0\to C_\bG(s)\to A_\bG(s)\to 1\eqno(*)$$ 
In the first section we show
\begin{theorem}\label{principal}
The sequence $(*)$ always splits, that is
there is a finite subgroup $A_0\subset C_\bG(s)$ such that $C_\bG(s)$
is a semidirect product $C_\bG(s)^0\rtimes A_0$.
\end{theorem}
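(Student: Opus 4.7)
First I would fix a maximal torus $\bT\ni s$ and identify $A_\bG(s)$ combinatorially. By Steinberg's theorem, $C_\bG(s)^0=\langle\bT,U_\alpha:\alpha(s)=1\rangle$ has Weyl group $W^0(s)=\langle s_\alpha:\alpha(s)=1\rangle$, and the natural surjection $W(s):=\{w\in W:w(s)=s\}\to A_\bG(s)$ has kernel $W^0(s)$, where $W=N_\bG(\bT)/\bT$. Choosing a Borel $\bB^0\supseteq\bT$ of $C_\bG(s)^0$ with positive system $\Phi_s^+$ of $\Phi_s:=\{\alpha\in\Phi:\alpha(s)=1\}$, the subgroup $\Omega:=\{w\in W(s):w(\Phi_s^+)=\Phi_s^+\}$ is a complement of $W^0(s)$ in $W(s)$---since $W^0(s)$ acts simply transitively on positive systems of $\Phi_s$---giving $\Omega\cong A_\bG(s)$.

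Next I would reduce to splitting a torus extension. Setting $M:=C_\bG(s)\cap N_\bG(\bT)\cap N_\bG(\bB^0)$, the simple transitivity of $C_\bG(s)^0$ on its pairs (maximal torus, Borel) gives $C_\bG(s)=M\cdot C_\bG(s)^0$, while $\bB^0$ being self-normalising in $C_\bG(s)^0$ yields $M\cap C_\bG(s)^0=\bT$. Thus $M/\bT\cong A_\bG(s)$, and any complement of $\bT$ in $M$ is automatically a complement of $C_\bG(s)^0$ in $C_\bG(s)$. The theorem reduces to splitting $1\to\bT\to M\to\Omega\to 1$.

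Set-theoretic lifts $\omega\mapsto\dot\omega\in N_\bG(\bT)$ yield a $2$-cocycle valued in $\bT$ whose class lives in $H^2(\Omega,\bT)$, and this cohomology is nonzero for purely formal reasons (for example $H^2(\BZ/2,\mathbb{G}_m)\neq0$ for the inversion action), so one must exploit the geometric origin of the cocycle. I would do so by passing to a cover $\pi:\tilde\bG\to\bG$ with $\tilde\bG^{\mathrm{der}}=\Gsc$ simply connected and $\ker\pi$ finite central. Fixing a lift $\tilde s\in\pi\inv(s)$, Steinberg's theorem forces $C_{\tilde\bG}(\tilde s)$ to be connected, and the commutator map $\tilde g\mapsto[\tilde g,\tilde s]$ is a group homomorphism from $\tilde H:=\pi\inv(C_\bG(s))$ to $\ker\pi$ with kernel $C_{\tilde\bG}(\tilde s)$ and image canonically isomorphic to $A_\bG(s)$. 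Because $\ker\pi\subseteq Z(\tilde\bG)\subseteq C_{\tilde\bG}(\tilde s)$, any complement $\tilde A_0$ of $C_{\tilde\bG}(\tilde s)$ in $\tilde H$ projects injectively under $\pi$ to a complement $A_0\subseteq C_\bG(s)$ of $C_\bG(s)^0$, so it suffices to produce $\tilde A_0$.

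Producing $\tilde A_0$ is the technical heart---and main obstacle---of the proof. For each $a\in A_\bG(s)$ with associated central element $z_a\in\ker\pi$, any $\tilde g_a\in\tilde\bG$ conjugating $\tilde s$ to $z_a\tilde s$ represents $a$ in $\tilde H/C_{\tilde\bG}(\tilde s)$; centrality of the $z_a$ makes $\tilde g_a\tilde g_b$ automatically conjugate $\tilde s$ to $z_{ab}\tilde s$, so the failure of $a\mapsto\tilde g_a$ to be a group homomorphism is a $2$-cocycle with values in $C_{\tilde\bG}(\tilde s)$. Requiring each $\tilde g_a$ to act on $C_{\tilde\bG}(\tilde s)$ by pinning-preserving automorphisms forces the cocycle into $Z(C_{\tilde\bG}(\tilde s))$, and I would kill the remaining class in $H^2(A_\bG(s),Z(C_{\tilde\bG}(\tilde s)))$ using divisibility of the connected torus $Z(C_{\tilde\bG}(\tilde s))^0$ together with the rigid embedding of $A_\bG(s)$ into $\ker\pi$.
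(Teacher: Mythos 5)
Your setup and initial reductions are sound and track the paper's strategy: the reduction to $M/\bT\cong A_\bG(s)$ is equivalent to the paper's reduction to the sequence $(**)$ on torus normalisers; the passage to a cover with simply connected derived group parallels the paper's use of $\pi:\Gsc\to\bG$; and the commutator homomorphism $\tilde g\mapsto[\tilde g,\tilde s]$ with kernel $C_{\tilde\bG}(\tilde s)$ is a clean device, as is forcing pinning-preserving representatives so that the obstruction lands in $H^2\bigl(A_\bG(s),Z(C_{\tilde\bG}(\tilde s))\bigr)$.

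But the claim ``it suffices to produce $\tilde A_0$'' is where the argument fails. Because you insist on $\ker\pi$ finite and central, when $\bG$ is semisimple you are forced to take $\tilde\bG=\Gsc$, and the extension $1\to C_{\Gsc}(\tilde s)\to\tilde H\to A_\bG(s)\to1$ can genuinely fail to split even though the original one in $\bG$ does. Take $\bG=\mathrm{PGL}_2$ in characteristic $\neq2$ and $s$ of order $2$: $C_\bG(s)=N_\bG(\bT)$ splits over $\bT$, since the image of $\left(\begin{smallmatrix}0&1\\-1&0\end{smallmatrix}\right)$ has order $2$; but in $\Sl_2$ every off-torus element of $N_{\Sl_2}(\tilde\bT)$ squares to $-I$, so no $\tilde A_0$ exists. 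This is exactly the nonvanishing $H^2(\BZ/2,k^\times)$ for the inversion action that you mention and then set aside. The class you propose to kill is therefore nonzero in $\Gsc$; neither divisibility of $Z(C_{\tilde\bG}(\tilde s))^0$ nor the embedding $A_\bG(s)\hookrightarrow\ker\pi$ can make it vanish, because it does not vanish. What the theorem needs---and what the paper's Lemmas \ref{lifting} and \ref{lifting2} prove via the Tits lifting, the braid-group identity of Proposition \ref{adams-vogan}, and the type-by-type computations with the planches---is the quantitative statement $\tau(a)^{o(a)}=\iota(a^{o(a)/2})$: the defect of the lift lies precisely in $\ker\pi$, so that only \emph{after} pushing down to $\bG$ do the orders match and a group section can be assembled. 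Those computations are the substance of the proof; your proposal flags them as the ``technical heart'' but supplies neither them nor a substitute, and the soft cohomological mechanism you invoke is refuted by the very example you cite.
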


In the second section we assume that $\bG$ is defined over an algebraic
closure of a finite field $\Fq$ and we look at the fixed points $\bG^F$ of
$\bG$ under an endomorphism $F$ such that some power is a Frobenius endomorphism
attached to an $\Fq$-structure on $\bG$. We assume that $C_\bG(s)$ is $F$-stable, so
we have the exact sequence 
$$1\to C_\bG(s)^{0F}\to C_\bG(s)^F\to A_\bG(s)^F\to 1\eqno(\sharp)$$ 
We show 
\begin{theorem}\label{principal2}
The sequence $(\sharp)$ splits, 
that is  there is  a finite  subgroup $A_0\subset
C_\bG(s)^F$ such that $C_\bG(s)^F$ is a semidirect product
$C_\bG(s)^{0F}\rtimes A_0$.
\end{theorem}
These results have already been obtained for adjoint groups of type $D$
in \cite{cabanes-spaeth}. We use different methods.

We  would like to thank Michel Brou\'e for making us aware of the questions
that  we  answer  in  Theorems  \ref{principal}  and  \ref{principal2}; the
motivation  behind his query is that our positive answer shows that
semisimple  centralisers  are  more  ``generic''  than  might have been 
expected a priori.

We thank Gunter Malle and the referees for a thorough proofreading of previous
versions.
\section{Algebraic groups over an algebraically closed field}\label{section1}
Let  $\bT$ be a  maximal torus of $\bG$ such that $s\in\bT$. Let $\Phi$ be the
root  system of $\bG$ with respect to $\bT$,  and let $W=N_\bG(\bT)/\bT$
be the Weyl group of $\bG$ relative to $\bT$.
To $\alpha\in\Phi$ is associated a one-parameter subgroup
$\bU_\alpha$ normalised by $\bT$. Then we have $C_\bG(s)^0=\langle \bT,
\{\bU_\alpha\mid     \alpha\in\Phi(s)\}\rangle$,
a reductive group with root system
$\Phi(s)=\{  \alpha\in  \Phi\mid  \alpha(s)=1\}$
(see for example
\cite[Proposition 3.5.1]{DM}).     We    set
$W^0(s)=\langle  s_\alpha\in  W\mid  \alpha\in\Phi(s)\rangle$  and  $W(s)=\{w\in
W\mid\lexp  ws=s\}$. The group $C_\bG(s)$ is generated by $C_\bG(s)^0$ and
$C_{N_\bG(\bT)}(s)$, the latter being the preimage of $W(s)$ in $N_\bG(\bT)$
(\cite[Proposition 3.5.1]{DM}).
Then  we  have $A_\bG(s)\simeq W(s)/W^0(s)$ (see for example
\cite[Remark 3.5.2]{DM}).
Splitting  the  sequence  $(*)$  is  a consequence of  splitting the exact
sequence
$$1 \to N_{C_\bG(s)^0}(\bT) \to  N_{C_\bG(s)}(\bT)\to A_\bG(s)\to1\eqno{(**)}$$
since if $N_{C_\bG(s)}(\bT)=N_{C_\bG(s)^0}(\bT)\rtimes A_0$ for some group
$A_0$ then $C_\bG(s)=C_\bG(s)^0\rtimes A_0$.

Since $W^0(s)$ acts simply transitively on the bases of $\Phi(s)$, 
for any choice of a system $\Phi^+(s)$ of positive roots in $\Phi(s)$,
we have the following
\begin{proposition}\label{AW(s)} The sequence  $1\to W^0(s)\to W(s)\to A_\bG(s)\to 1$
splits, by lifting $A_\bG(s)$ to 
$A_W(s):=\{w\in W(s)\mid w(\Phi^+(s))=\Phi^+(s)\}$.
\end{proposition}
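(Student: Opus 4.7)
The strategy is the standard fact that the Weyl group of a root system acts simply transitively on the set of positive systems, applied here to the sub-root-system $\Phi(s)$.

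First I would verify that the statement makes sense: the group $W(s)$ stabilises $\Phi(s)$ as a set, since $w \in W(s)$ means $\lexp w s = s$ and $\alpha \in \Phi(s)$ iff $\alpha(s)=1$, so $(w\alpha)(s) = \alpha(\lexp{w^{-1}}s) = \alpha(s) = 1$. Hence $W(s)$ acts on the set of positive systems of $\Phi(s)$, and $A_W(s)$ is the stabiliser of the chosen $\Phi^+(s)$, so it is indeed a subgroup of $W(s)$.

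Next, recall that $W^0(s)$ is the Weyl group of the root system $\Phi(s)$ (being generated by the reflections $s_\alpha$ for $\alpha\in\Phi(s)$), and therefore acts simply transitively on the set of positive systems of $\Phi(s)$. Given any $w \in W(s)$, the set $w(\Phi^+(s))$ is some positive system of $\Phi(s)$, so there is a unique $u \in W^0(s)$ with $uw(\Phi^+(s)) = \Phi^+(s)$; then $uw \in A_W(s)$ and $w = u^{-1} \cdot (uw)$. This shows $W(s) = W^0(s) \cdot A_W(s)$. Simple transitivity also gives $W^0(s) \cap A_W(s) = 1$, since the only element of $W^0(s)$ that fixes $\Phi^+(s)$ is the identity. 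Combining these, $W(s) = W^0(s) \rtimes A_W(s)$, so the projection $W(s) \to A_\bG(s) \simeq W(s)/W^0(s)$ restricts to an isomorphism $A_W(s) \xrightarrow{\sim} A_\bG(s)$, which is the desired splitting.

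There is really no obstacle here; this is exactly the reason the paper labels the proposition as ``obvious''. The only point that requires minimal care is the observation that $W(s)$ preserves $\Phi(s)$, so that ``$w(\Phi^+(s))$ is a positive system of $\Phi(s)$'' makes sense, and this follows immediately from the definition of $W(s)$.
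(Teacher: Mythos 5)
Your argument is correct and is exactly the standard one the paper leaves implicit when it calls the proposition ``obvious'': $W^0(s)$ is the Weyl group of $\Phi(s)$, it acts simply transitively on the positive systems of $\Phi(s)$, and $A_W(s)$ is the stabiliser of the chosen one, giving $W(s)=W^0(s)\rtimes A_W(s)$. Nothing to add; this matches what the authors intend.
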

For any choice of $\Phi^+(s)$ we have a group $A_W(s)$.
We will find a choice $\Phi^+(s)$ such that we can lift the corresponding
subgroup $A_W(s)$ by a group morphism to $N_{C_\bG(s)}(\bT)$, 
splitting thus $(**)$.
Since the various choices for $\Phi^+(s)$ are conjugate under $W^0(s)$, by
lifting the conjugating element to $N_{C_\bG(s)}(\bT)$ we can lift
$A_W(s)$ for any other choice.
\subsection{Reduction to the case $s$ of finite order}
To prove that $(*)$ splits, we may assume that $s$ has finite order
by \cite[Proposition 1.5]{bonnafe},
which  states that $s$  has same centraliser  as some semisimple element of
finite order.

\subsection{The Tits lifting, and reduction to $\ch k\ne 2$}\label{Tits}
Assume that $\bG$ is defined over an algebraically closed field $k$ of
characteristic $p$.
Choosing an  isomorphism between  $(\BQ/\BZ)_{p'}$ (the elements of order prime to $p$ of
$\BQ/\BZ$, where by convention
$(\BQ/\BZ)_{p'}=\BQ/\BZ$ when $p=0$) and the
roots  of unity  in $k^\times$,  we can  identify the  elements of $\bT$ of
finite  order with $Y(\bT)\otimes(\BQ/\BZ)_{p'}$ where $Y(\bT)$ is the group
of cocharacters of
$\bT$.

We denote by $\Phi^\vee\subset Y(\bT)$ the coroots of $\bG$ with respect to 
$\bT$ and by $\alpha\mapsto\alpha^\vee$ the natural
bijection $\Phi\xrightarrow\sim\Phi^\vee$.

We  choose a Borel subgroup
$\bG\supset\bB\supset\bT$,  which defines a basis $\Pi$  of $\Phi$
and a system $\Phi^+$  of positive roots; it
also defines a Coxeter system $(W,S)$ where $S=\{s_\alpha\mid\alpha\in\Pi\}$.

We will use a Tits lifting (see \cite{Tits}), a lifting 
$\sigma: W\to N_\bG(\bT)$ which is deduced from a lifting of $S$
satisfying the relations
\begin{itemize}
 \item[(i)] If $m$ is the order of $st$ for $s,t\in S$ then
$\underbrace{\sigma(s)\sigma(t)\sigma(s)\cdots}_{m\textrm{ terms}}=
\underbrace{\sigma(t)\sigma(s)\sigma(t)\cdots}_{m\textrm{ terms}}$.
\item[(ii)] Denoting by $s\mapsto\alpha_s$ the bijection $S\simeq\Pi$,  for $s\in S$ we
have $\sigma(s)^2=\alpha_s^\vee/2\in Y(\bT)\otimes(\BQ/\BZ)_{p'}$ if $p\neq 2$ and
$\sigma(s)^2=1$ if $p=2$.
\end{itemize}
It follows from (i) (see \cite[Chapter IV, no 1.5, Proposition 5]{bbk})
that for $w\in W$ we can define
$\sigma(w)$ by $\sigma(s_1)\ldots\sigma(s_r)$ if $w=s_1\ldots s_r$ is any
reduced expression, thus deducing a lifting of $W$. 

Let $B(W)=\langle\bs\in\bS\mid
\underbrace{\bs\bt\bs\cdots}_{m\textrm{ terms}}=
\underbrace{\bt\bs\bt\cdots}_{m\textrm{ terms}}\rangle$ be the braid group of
$W$ where $\bS$ is a copy of $S$ and $m$ is the order of $st$.
It follows also from (i) there is a well-defined group morphism
$B(W)\xrightarrow\ts N_\bG(\bT)$ defined by $\ts(\bs)=\sigma(s)$. If we
define the lift $w\mapsto\bw: W\to\bW\subset B(W)$ by lifting a reduced
expression of $w$, we have more generally $\ts(\bw)=\sigma(w)$.

If $p=2$, it follows from $\sigma(s)^2=1$ for $s\in S$ that $\sigma(W)$ is a 
group isomorphic to $W$, thus $(*)$
splits in this case by lifting $A_W(s)$ to $\sigma(A_W(s))$. 
We assume from now on in this section that $p\ne 2$.

\subsection{Reduction to the case of semisimple groups}

\begin{proposition}\label{derived}
Let $\bG'$ be the derived subgroup of $\bG$ and $\bT'=\bT\cap\bG'$. If $(**)$ splits in 
$\bG'$ for any $s'\in\bT'$ then it splits in $\bG$ for any $s\in\bT$.
\end{proposition}
\begin{proof} We have $\bG=(Z\bG)^0\cdot \bG'$. Let $s$ be a semisimple element of $\bT$.
Write  $s=s'z$ with  $s'\in\bT'$ and  $z\in(Z\bG)^0$. 
By assumption $N_{C_{\bG'}(s')}(\bT')=N_{C_{\bG'}(s')^0}(\bT')\rtimes  A_0$ for some
subgroup $A_0$ of $N_{C_{\bG'}(s')}(\bT')$. We have
$N_{C_\bG(s)}(\bT)=(Z\bG)^0 N_{C_{\bG'}(s')}(\bT')=
(Z\bG)^0.N_{C_{\bG'}(s')^0}(\bT')A_0 =N_{C_\bG(s)^0}(\bT).A_0$.
To prove that $N_{C_\bG(s)}(\bT)=N_{C_\bG(s)^0}(\bT)\rtimes A_0$,
it remains to prove that $A_0\cap N_{C_\bG(s)^0}(\bT)=\{1\}$. If $a_0$ is in $A_0\cap
N_{C_\bG(s)^0}(\bT)$, it can be
written $a_0=z_0 n'$ with $n'\in N_{C_{\bG'}(s')^0}(\bT')$ and $z_0\in (Z\bG)^0$. Then
$z_0\in(Z\bG)^0\cap\bG'\subset Z\bG'\subset\bT'\subset N_{C_{\bG'}(s')^0}(\bT')$,
so that $a_0$ is in $N_{C_{\bG'}(s')^0}(\bT')\cap A_0=\{1\}$.
\end{proof}

\subsection{Proof of Theorem \ref{principal}}

According  to  the  previous  reductions,  we  assume  now  that  $\bG$  is
semisimple and $\ch k\ne 2$.

We recall that we have chosen in Subsection \ref{Tits} a basis $\Pi$ of the
root system $\Phi$ of $\bG$ with respect to $\bT$.
\begin{definition} \label{decomposition} Let $\Phi=\Phi_1\cup\ldots\cup\Phi_n$ be the decomposition
into irreducible subsystems of the root system of $\bG$, and let 
$\tilde\alpha_i$  be the negative of the highest root of $\Phi_i$ (for the
order defined by $\Pi\cap\Phi_i$). We define
$\CA=N_W(\Pi\cup\{\tilde\alpha_1,\ldots,\tilde\alpha_n\})$. 
\end{definition}

There exists an exact sequence
$$1\to Z\to\Gsc\xrightarrow\pi\bG\to 1$$
where $\Gsc$ is semisimple simply connected and $Z\subset Z\Gsc$.

Let $\Tad=\bT/Z\bG=\Tsc/Z\Gsc$ where $\Tsc=\pi\inv(\bT)$; by
\cite[(3.7)]{bonnafe} we have an isomorphism
$\varpi^\vee:\CA\xrightarrow\sim Y(\Tad)/Y(\Tsc)$. Using the isomorphism
\cite[Lemma 3.1]{bonnafe} $(Y(\Tad)/Y(\Tsc))_{p'}\xrightarrow\sim Z\Gsc$
we deduce an isomorphism $\iota:\CA_{p'}\xrightarrow\sim Z\Gsc$. 
We define $\CA_\bG\subset\CA$ as $\varpi^{\vee-1}(Y(\bT)/Y(\Tsc))$; 
then $\varpi^\vee$ restricts to an isomorphism 
$$(\CA_\bG)_{p'}\xrightarrow\sim(Y(\bT)/Y(\Tsc))_{p'}=:\Pi_1(\bG),$$
the fundamental group of $\bG$, and by \cite[Lemma 3.1]{bonnafe}
$\iota$ restricts
to an isomorphism $\iota:(\CA_\bG)_{p'}\xrightarrow\sim Z$.

The element $s$ is identified with an
element of $Y(\bT)\otimes(\BQ/\BZ)_{p'}$; let $\lambda$ be a representative
of $s$ in $Y(\bT)\otimes\BQ$. We have 
$W(s)=\{w\in W\mid w(\lambda)\equiv\lambda\pmod{Y(\bT)}\}$.
Since $Y(\bT)\rtimes W$ contains the affine Weyl group $Y(\Tsc)\rtimes W$,
we can replace $s$ by a $W$-conjugate and
$\lambda$  by  a  $Y(\bT)$-translate  in  order  to get
$\lambda\in\CC$, the closure of the fundamental alcove of the affine Weyl group defined as
$$\CC=\{\lambda\in  Y(\bT)\otimes\BQ\mid  \langle\alpha,\lambda\rangle\ge  0
\text{ for }\alpha\in\Pi\text{ and }\langle\lambda,\tilde\alpha_i\rangle\ge
-1\text{ for }i\in 1,\ldots,n\}.$$

By \cite[3.14(a)]{bonnafe}, the set
$(\Pi\cap \Phi(s))\cup\{\tilde\alpha_i\mid
\langle\lambda,\tilde\alpha_i\rangle=-1\}$ is a basis of $\Phi(s)$.
We choose $\Phi^+(s)$  as the positive combinations of this basis.
The results of \cite{bonnafe} show that $A_W(s)$ as defined in Proposition \ref{AW(s)}
is a subgroup of $\CA_\bG$, precisely
we  have (see \cite[3.14(b)]{bonnafe})
$$\begin{aligned}
A_W(s)&=\{  c\in \CA_\bG\mid
c(\lambda)\equiv\lambda\pmod{Y(\bT)}\text{ and }
c(\Phi^+(s))=\Phi^+(s)\}\\
 &=\{c\in\CA_\bG\cap W(s) \mid c(\Phi^+(s))=\Phi^+(s)\}.
\end{aligned}
$$

Since $A_W(s)$ is a subgroup of $\CA_\bG$ the following proposition
proves Theorem \ref{principal}.
\begin{proposition}\label{group lifting}
We can lift $\CA_\bG$ by a group morphism to $N_\bG(\bT)$.
\end{proposition}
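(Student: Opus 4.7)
The plan is to treat the restriction of the Tits lifting $\sigma$ to $\CA_\bG$ as a set-theoretic section of the projection $\pi\colon N_\bG(\bT)\to W$, and to modify it by a $1$-cochain with values in $\bT$ so as to turn it into a group morphism $\CA_\bG\to N_\bG(\bT)$. Cohomologically, the obstruction is a class in $H^2(\CA_\bG,\bT)$ for the $\CA_\bG$-action on $\bT$ coming from $\CA_\bG\hookrightarrow W$, and the crux of the proof will be to show that this group vanishes.

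First I will define the Tits $2$-cocycle
$$h(c_1,c_2):=\sigma(c_1)\sigma(c_2)\sigma(c_1c_2)\inv.$$
This lies in $\ker\pi=\bT$ since both factors project to $c_1c_2$, and it satisfies the usual $2$-cocycle relation
$$h(c_1,c_2)\cdot h(c_1c_2,c_3)=\lexp{c_1}{h(c_2,c_3)}\cdot h(c_1,c_2c_3)$$
by associativity of the product in $N_\bG(\bT)$. Property (ii) of $\sigma$ additionally forces $h$ to land in $\bT[2]$, but the vanishing argument below needs only that $h\in Z^2(\CA_\bG,\bT)$.

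I then claim that $H^n(\CA_\bG,\bT)=0$ for all $n\ge 1$. As an abstract abelian group, $\bT\cong(k^\times)^{\dim\bT}$ decomposes as the direct sum of a torsion-free divisible $\BQ$-vector space and a torsion subgroup isomorphic to $((\BQ/\BZ)_{p'})^{\dim\bT}$. Both summands are uniquely divisible by every positive integer; the non-obvious case is that the torsion summand is uniquely $p$-divisible, which holds because it has no $p$-torsion and $p$ is invertible modulo the order of every prime-to-$p$ root of unity. Consequently multiplication by $|\CA_\bG|$ is bijective on $\bT$, and therefore acts invertibly on $H^n(\CA_\bG,\bT)$; combined with the standard restriction-corestriction fact that $|\CA_\bG|$ also annihilates this cohomology, we obtain $H^n(\CA_\bG,\bT)=0$.

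Granted the vanishing, $[h]=0$ produces a map $t\colon\CA_\bG\to\bT$ with $h(c_1,c_2)=t(c_1)\cdot\lexp{c_1}{t(c_2)}\cdot t(c_1c_2)\inv$, and the corrected section $\hat\sigma(c):=t(c)\inv\sigma(c)$ is then a group morphism $\CA_\bG\to N_\bG(\bT)$ lifting the inclusion $\CA_\bG\hookrightarrow W$, which proves the proposition. The main delicate point I expect is the cohomology vanishing when $p$ divides $|\CA_\bG|$ (which genuinely occurs for certain adjoint groups in small characteristic): the decisive ingredient there is that despite being $p$-torsion-free, the torsion part of $\bT$ remains $p$-divisible, which is what allows the annihilation-and-invertibility argument to go through unchanged.
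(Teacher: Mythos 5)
The cohomological framing is fine (any set-theoretic lift of $\CA_\bG$ to $N_\bG(\bT)$ gives a $2$-cocycle with values in $\bT$, and the cocycle is a coboundary if and only if a group-morphism lift exists), but the vanishing argument contains a genuine error that cannot be repaired: \emph{multiplication by $|\CA_\bG|$ is not bijective on $\bT$}. You have the delicacy exactly backwards. For $m$ a power of $p$, multiplication by $m$ on $k^\times$ \emph{is} bijective (no $p$-torsion, and divisibility). But for a prime $\ell\ne p$ dividing $m$, the $\ell$-power map on $k^\times$ has kernel $\mu_\ell(k)\cong\BZ/\ell\BZ$, so it is surjective but not injective. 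Equivalently, $(\BQ/\BZ)_{p'}$ is divisible but not uniquely divisible: $1/\ell$ and $0$ both die under multiplication by $\ell$. Since $|\CA_\bG|$ divides $|Z\Gsc|$, its prime factors lie away from $p$ (indeed $\iota$ identifies $(\CA_\bG)_{p'}$ with $Z$, and the $p$-part of $\CA_\bG$ contributes nothing), so in every case of interest $|\CA_\bG|$ is \emph{exactly} the kind of integer for which multiplication fails to be injective on $\bT$. The inference ``annihilated by $|\CA_\bG|$ and $|\CA_\bG|$ acts invertibly, hence zero'' therefore has a false hypothesis.

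In fact the conclusion $H^2(\CA_\bG,\bT)=0$ is itself false, so no repair of that step is possible. Take $\bG=\mathrm{PGL}_2$ over $k=\overline{\BF_p}$ with $p$ odd. Then $\CA_\bG=\CA=\BZ/2\BZ$ acts on the one-dimensional torus $\bT=k^\times$ by inversion. For $\BZ/2\BZ=\langle\sigma\rangle$ one has $H^2(\BZ/2\BZ,k^\times)=(k^\times)^\sigma/N(k^\times)$; here $(k^\times)^\sigma=\{t:t=t^{-1}\}=\{\pm1\}$ and $N(t)=t\cdot t^{-1}=1$, so $H^2(\BZ/2\BZ,k^\times)\cong\BZ/2\BZ\neq 0$. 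The proposition is nonetheless true for $\mathrm{PGL}_2$ (one lifts the nontrivial Weyl element to the image of $\begin{pmatrix}0&1\\-1&0\end{pmatrix}$, which has order $2$ in $\mathrm{PGL}_2$), but that is precisely because the \emph{particular} Tits cocycle is a coboundary, not because the whole $H^2$ vanishes. Detecting this is the actual content of the proposition and is what the paper's proof establishes: it produces, via Lemma~\ref{lifting} and the case-by-case analysis in simply connected quasi-simple groups, a lift $\tau$ of $\CA$ whose values commute and satisfy the order condition $(\flat)$, then pushes it down to $\bG$ and uses the elementary lemma that a commuting lift with the correct element orders can be straightened into a homomorphism. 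Your approach replaces exactly the hard part by an appeal to a cohomology vanishing that does not hold.
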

\begin{proof}
A part of the proof, that of Proposition \ref{lifting} in the case of simply 
connected quasisimple groups, will be done in the next subsection.

We start with a lemma showing what we can do for the whole of $\CA$: not a
group lifting, but something similar.
\begin{definition}\label{flat}
Let  $o(a)$ denote the  order of an  element $a$. We  say that a lift $\dot
a\in N_\Gsc(\Tsc)$ of $a\in\CA$ has property $(\flat)$ if
$${\dot a}^{o(a)}=\begin{cases}\iota(a^{o(a)/2})&\text{if $o(a)$ is even,}\\
\iota(a^{o(a)})=1&\text{otherwise.}\end{cases}\eqno{(\flat)}$$ 
\end{definition}
Note that $a^{o(a)/2}$ lies in $\CA_{p'}$ since $p\ne 2$, thus
$\iota(a^{o(a)/2})$ makes sense.

\begin{proposition} \label{lifting} 
There exists a lift $\tau$ of $\CA$ to a set of commuting
elements  of $N_\Gsc(\Tsc)$ such  that for each  $a\in\CA$ we have
$\tau(a)$ has property $(\flat)$.
\end{proposition}
\begin{proof}
We show here that, assuming Proposition \ref{lifting} holds for a
quasi-simple group (which we will show in the next subsection), 
it holds for $\Gsc$.

The  decomposition $\Phi=\Phi_1\cup\ldots\cup\Phi_n$ corresponds
to a direct product decomposition $\Gsc=\bG_1\times\cdots\times\bG_n$ where
the  $\bG_i$  are  quasi-simple  simply  connected  groups (see for example
\cite[Theorem 24.3]{milne}).

The group $\CA$ is a direct product
of  groups $\CA_i$ for each $\bG_i$, which  are cyclic unless $\bG_i$ is of
type  $D_{2k}$, in which case $\CA_i$ is isomorphic to $(\BZ/2\BZ)^2$. We
fix  a  generator  $c_i$  of  $\CA_i$  (resp.\  two generators $c_i,c'_i$ of
$\CA_i$  in case $\bG_i$ is  of type $D_{2k}$). 

We assume that Proposition \ref{lifting} holds for a
quasi-simple group. That is if $\bT_i=\Tsc\cap\bG_i$, there is a lift 
 $\tau(c_i)\in N_{\bG_i}(\bT_i)$ which has property $(\flat)$
(and the same for $c'_i$ when $\bG_i$ is of type $D_{2k}$;
further, in this case, $\tau(c_i)$ and $\tau(c'_i)$ commute).

\begin{lemma}\label{product} With the above notation, assume we have $\tau_i:\CA_i\to
N_{\bG_i}(\bT_i)$ such that $\tau_i(a)$ satisfies $(\flat)$ for all $a\in\CA_i$ and define 
$\tau$ for an arbitrary element $a=\prod_i a_i\in\Pi\CA_i$
by $\tau(a)=  \prod_i\tau_i(a_i)$; then
$\tau(a)$ satisfies $(\flat)$ for all $a\in\CA$.
\end{lemma}
\begin{proof} It is sufficient to prove the result for the direct product of two groups
$\bG_1\times\bG_2$ with $\CA=\CA_1\times \CA_2$. Let $a=(a_1,a_2)\in\CA$. The order of $a$
is the lcm $l$ of $o(a_1)$ and $o(a_2)$. If 
$o(a_1)$ and $o(a_2)$ are both odd then $o(a)$ is odd, and
$\tau(a)^{o(a)}=\tau_1(a_1)^{o(a)}\tau_2(a_2)^{o(a)}=1$;
if both orders are even,
we have $\tau(a)^l=\tau_1(a_1)^l\tau_2(a_2)^l=
\tau_1(a_1)^{o(a_1)\cdot(l/o(a_1))}\tau_2(a_2)^{o(a_2)\cdot(l/o(a_2))}
=\iota(a_1^{o(a_1)/2})^{l/o(a_1)}\iota(a_2^{o(a_2)/2})^{l/o(a_2)}=
\iota(a_1^{l/2})\iota(a_2^{l/2})=\iota(a^{l/2})$.
We assume now that $o(a_1)$ is even and $o(a_2)$ is odd.
We have $\tau(a)^l=\tau(a_1)^l\tau(a_2)^l=
\tau(a_1)^l=\tau(a_1)^{o(a_1)\cdot(l/o(a_1))}=\iota(a_1^{o(a_1)/2})^{l/o(a_1)}=
\iota(a_1^{l/2})=\iota(a_1^{l/2})\iota(a_2^{l/2})=\iota(a^{l/2})$, the second 
equality and the equality before last since $l/2$ is a multiple of $o(a_2)$.
\end{proof}
\end{proof}
We show now how to deduce Proposition \ref{group lifting} from Proposition \ref{lifting}.
This will complete the proof of Theorem \ref{principal}.
We define a lift $\tau_1$ of $\CA_\bG$ by $\pi(\tau(\CA_\bG))$ where $\tau$ is
as in Proposition \ref{lifting} and $\pi$ is the simply connected covering $\Gsc\to\bG$.
Condition $(\flat)$ implies in particular that if $a\in\CA_\bG$ then
$\tau(a)^{o(a)}\in Z$,
thus the image $\tau_1(\CA_\bG)$ in $N_\bG(\bT)$
is a set of commuting elements lifting $\CA_\bG$ such that for any
$a\in\CA_\bG$ we have that $o(a)$ is equal to the order of $\tau_1(a)$ 
(it cannot be less since $\tau_1(a)$ is a lift of $a$). Therefore the following lemma gives us
a lift of $\CA_\bG$ to $N_\bG(\bT)$ that is a group morphism. This concludes the proof of 
Proposition \ref{group lifting} and Theorem \ref{principal}, once Proposition \ref{lifting} is
proved.
\begin{lemma}\label{tau2}
If there exists a lift $\tau_1$ of $\CA_\bG$ to $N_\bG(\bT)$
consisting of commuting elements and
such that $o(\tau_1(a))=o(a)$ for all $a\in\CA_\bG$, then there exists another lift $\tau_2$ of
$\CA_\bG$ which is a group morphism $\CA_\bG\to N_\bG(\bT)$. 
\end{lemma}
\begin{proof}
We decompose $\CA_\bG$ into a product of cyclic subgroups
$\CA_\bG=\Pi_i\langle g_i\rangle$. We define $\tau_2(\Pi
g_i^{m_i})=\Pi_i(\tau_1(g_i))^{m_i}$; $\tau_2$ is well-defined since
$o(\tau_1(g_i))=o(g_i)$. Then
$\tau_2(\CA_\bG)$ is a group, the group generated by the $\tau_1(g_i)$,
and lifts $\CA_\bG$ by its definition.
\end{proof}
\end{proof}
We should explain  where the ``strange'' condition $(\flat)$ originates from.
Trying to find a lift $\tau_1$ satisfying the assumption of Lemma \ref{tau2}
for the group $\Sl_n/\mu_d$ where $\mu_d$ is the group of roots of unity of
order $d$, for some $d|n$, we found that starting from a lift to $\Sl_n$
satisfying $(\flat)$ would do the job (among other possibilities); the
advantage of condition $(\flat)$ is that it is preserved by a product
as shows Lemma \ref{product}.

\subsection{Proof of Proposition \ref{lifting}}
In this subsection we prove the part of Proposition \ref{lifting} which was deferred, that is
for the case of quasi-simple groups.
\begin{lemma} \label{lifting2} 
Assume that $\bG$ is a simply connected quasisimple group.
There exists a lift $\tau$ of $\CA$ to a set of commuting
 elements  of $N_\bG(\bT)$ such that for each  $a\in\CA$ the element
$\tau(a)$ has property $(\flat)$.
\end{lemma}

We will prove this case by case for each type.
For $I\subset S$ we let $w_I$ be the longest element of the Coxeter subgroup
$W_I$ generated by $I$ and set $w_0=w_S$.
We  let $c$ be  a generator of $\CA$ (resp.~$c,c'$ be generators in case
$D_{2k}$). The element $c$ is of the form 
$w_0w_{S-\{s_j\}}$ where $s_j$ corresponds to  some minuscule coweight
(a minuscule coweight is a coweight associated to a simple root $\alpha_j$ such that the coefficient
of the highest root of $\Phi$ on $\alpha_j$ is $1$, equivalently a
coweight which is a vertex of the fundamental alcove of the affine Weyl
group); similarly we have two minuscule coweights in type
$D_{2k}$ (see \cite[Section 3.B, Equation 3.5]{bonnafe}, where $w_0w_{S-\{s_j\}}$ is denoted
by $z_{\alpha_j}$).

We now generalise \cite[Lemma 5.4]{AV} to the braid group. We use the notation of Subsection 
\ref{Tits}.
We denote by $x\mapsto\tilde x$ the anti-automorphism of $B(W)$ extending
the identity on $\bS$. Thus if $\bw$ is the lift of $w\in W$ to
$\bW$, the lift of $w\inv$ is $\tilde\bw$.
\begin{proposition}\label{adams-vogan} Let
$\rho^\vee=(\sum_{\alpha\in\Phi^+}\alpha^\vee)/2$. Then for any
$\bb\in B(W)$ of image $w\in W$ we have $\ts(\bb)\ts(\tilde \bb)=
(\rho^\vee-w(\rho^\vee))/2\in Y(\bT)\otimes (\BQ/\BZ)_{p'}$.
\end{proposition}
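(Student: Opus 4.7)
The plan is to prove the identity by induction on the word length of $\bb$ when written as a product of the generators $\bs\in\bS$ and their inverses in $B(W)$. The product $\ts(\bb)\ts(\tilde\bb)$ has trivial image in $W$ (its two factors project to $w$ and $w\inv$), so it lies in $\bT$; the right-hand side shows it will in fact lie in the $2$-torsion subgroup of $\bT$, which sits inside $Y(\bT)\otimes(\BQ/\BZ)_{p'}$ since $p\ne 2$, so I can work additively there throughout.

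For the base case $\bb=\bs$ one has $\tilde\bs=\bs$ and hence $\ts(\bs)\ts(\bs)=\sigma(s)^2=\alpha_s^\vee/2$ by property (ii) of the Tits lifting, while $(\rho^\vee-s(\rho^\vee))/2=\alpha_s^\vee/2$ because $s$ sends $\alpha_s^\vee$ to $-\alpha_s^\vee$ and permutes the remaining positive coroots. For the inductive step I would write $F(\bb):=\ts(\bb)\ts(\tilde\bb)$ and $G(w):=(\rho^\vee-w(\rho^\vee))/2$ and verify that both satisfy the same twisted multiplicativity relative to the $W$-action on $Y(\bT)\otimes(\BQ/\BZ)_{p'}$. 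Using $\widetilde{\bb_1\bb_2}=\tilde\bb_2\tilde\bb_1$ together with the fact that $\ts(\bb_1)$ conjugates $\bT$ by the action of $w_1$, one gets
$$F(\bb_1\bb_2)=\ts(\bb_1)F(\bb_2)\ts(\tilde\bb_1)=w_1\bigl(F(\bb_2)\bigr)+F(\bb_1),$$
while a direct computation yields the identical relation $G(w_1w_2)=w_1\bigl(G(w_2)\bigr)+G(w_1)$. Since $F$ and $G$ agree on generators by the base case and both satisfy this cocycle identity, they agree on all of $B(W)$; inverse generators are handled by applying the cocycle to $\bb\cdot\bb\inv=1$, which gives $F(\bb\inv)=-w\inv(F(\bb))$, matching $G(w\inv)$.

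There is no serious obstacle — the argument is entirely formal once the cocycle structure is recognised. The only point that needs care is the consistency of the key identity $\sigma(s)^2=\alpha_s^\vee/2$: it holds only modulo $Y(\bT)$, so the whole calculation takes place in $Y(\bT)\otimes(\BQ/\BZ)_{p'}$, and the factor $1/2$ makes sense precisely thanks to the standing assumption $p\ne 2$ of this subsection.
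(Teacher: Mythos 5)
Your argument is correct and is, in substance, the same induction the paper uses: the paper peels off one element $\bs\in\bS^{\pm1}$ at a time and writes exactly the cocycle step $\ts(\bs\bb')\ts(\widetilde{\bs\bb'})=\ts(\bs)\bigl(\ts(\bb')\ts(\tilde\bb')\bigr)\ts(\bs)\inv\cdot\ts(\bs)^2 = s\bigl(F(\bb')\bigr)+\sigma(s)^2$, which is your twisted-multiplicativity identity specialised to $\bb_1=\bs$. Your only addition is to name the cocycle structure explicitly for arbitrary factorisations $\bb=\bb_1\bb_2$, which is a harmless generalisation of the same computation; the base case via $\sigma(s)^2=\alpha_s^\vee/2$ and $s(\rho^\vee)=\rho^\vee-\alpha_s^\vee$, and the treatment of inverse generators, coincide with the paper's.
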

\begin{proof}
 We proceed by induction on the terms of a decomposition of $\bb$ into a product
 of elements of
 $\bS^{\pm1}$. Writing $\bb=\bs\bb'$ where $\bs\in\bS^{\pm1}$ has image
 $s$ in $W$ and $\bb'$ has image $w'$ in $W$
 we get the sequence of equalities between elements of
 $Y(\bT)\otimes(\BQ/\BZ)_{p'}$:
 $$\begin{aligned}
  \ts(\bb)\ts(\tilde\bb)&=\ts(\bs)\ts(\bb')
  \ts(\tilde\bb')\ts(\bs)\inv\ts(\bs)^2=
  \ts(\bs)\ts(\bb')\ts(\tilde\bb')
  \ts(\bs)\inv+\alpha_s^\vee/2\\
  &=\ts(\bs)(\rho^\vee-w'(\rho^\vee))/2+\alpha_s^\vee/2\\
  &=s(\rho^\vee-w'(\rho^\vee))/2+\alpha_s^\vee/2\\
  &=(\rho^\vee-\alpha_s^\vee+\alpha_s^\vee-w(\rho^\vee))/2
 \end{aligned}$$
where the second equality uses the fact that
$\sigma(s)^2=\alpha_s^\vee/2 \in Y(\bT)\otimes(\BQ/\BZ)_{p'}$,
the third uses the induction hypothesis and the fifth uses
$s(\rho^\vee)=\rho^\vee-\alpha_s^\vee$
(see \cite[ Chap VI, Proposition 29(ii)]{bbk}).
\end{proof}
\begin{corollary}\label{sigma(involution)}
\begin{enumerate}
 \item For any $I\subset S$ we have $\sigma(w_I w_0)\sigma(w_0 w_I)=
  \rho^\vee-\rho^\vee_I\in Y(\bT)\otimes (\BQ/\BZ)_{p'}$ where 
 $\rho^\vee_I=(\sum_{\alpha\in\Phi_I^+}\alpha^\vee)/2$.
\item We have $\sigma(w_0)^2=\rho^\vee$ (this element is called the principal
involution of $\bG$).
\end{enumerate}
\end{corollary}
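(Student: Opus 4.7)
The plan is to specialise Proposition \ref{adams-vogan} to the canonical braid lift $\bb\in\bW$ of $w=w_Iw_0$. Then $\tilde\bb$ is the braid lift of $w\inv=w_0w_I$, so $\ts(\bb)=\sigma(w_Iw_0)$ and $\ts(\tilde\bb)=\sigma(w_0w_I)$; the proposition reads
$$\sigma(w_Iw_0)\sigma(w_0w_I)=(\rho^\vee-w_Iw_0(\rho^\vee))/2\in Y(\bT)\otimes(\BQ/\BZ)_{p'}.$$
All that remains for (i) is to identify $w_Iw_0(\rho^\vee)$ with $2\rho^\vee_I-\rho^\vee$.

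This rests on two classical identities: $w_0(\rho^\vee)=-\rho^\vee$ and $w_I(\rho^\vee)=\rho^\vee-2\rho^\vee_I$. The second is obtained by decomposing $\rho^\vee=\rho^\vee_I+(\rho^\vee-\rho^\vee_I)$ and using that $w_I$, as longest element of $W_I$, acts as $-1$ on $\rho^\vee_I$ while fixing $\rho^\vee-\rho^\vee_I$ (because $w_I$ permutes the positive roots outside $\Phi_I$ among themselves). Substituting gives $(\rho^\vee-w_Iw_0(\rho^\vee))/2=\rho^\vee-\rho^\vee_I$, which is (i). For (ii) I would specialise (i) to $I=\emptyset$: there $w_I=1$ and $\rho^\vee_I=0$, so the formula collapses to $\sigma(w_0)^2=\rho^\vee$.

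I do not anticipate any real obstacle: the corollary is a direct formal consequence of Proposition \ref{adams-vogan}, and the action of $w_I$ on $\rho^\vee$ is a standard Bourbaki-style calculation. The only convention worth flagging explicitly is that the braid anti-involution $\bb\mapsto\tilde\bb$ corresponds under $\ts$ to replacing $\sigma(w)$ by $\sigma(w\inv)$, which is precisely what lets the proposition produce the product $\sigma(w_Iw_0)\sigma(w_0w_I)$ on the left-hand side.
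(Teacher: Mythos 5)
Your proof is correct and follows essentially the same route as the paper: apply Proposition \ref{adams-vogan} to the canonical braid lift of $w=w_Iw_0$ (so that $\ts(\bb)=\sigma(w_Iw_0)$ and $\ts(\tilde\bb)=\sigma(w_0w_I)$), then substitute $w_0(\rho^\vee)=-\rho^\vee$ and $w_I(\rho^\vee)=\rho^\vee-2\rho^\vee_I$, and obtain (ii) as the case $I=\emptyset$. The only difference is cosmetic: you spell out the derivation of $w_I(\rho^\vee)=\rho^\vee-2\rho^\vee_I$, which the paper records as ``obvious.''
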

\begin{proof}
 The first formula is an immediate computation from Proposition
 \ref{adams-vogan}
and the obvious formula  $w_I(\rho^\vee)=\rho^\vee-2\rho^\vee_I$.
The second is the special case $I=\emptyset$.
\end{proof}
Note that the principal involution is a central element by the formula
$s_\alpha(\rho^\vee)-\rho^\vee=-\alpha^\vee\in Y(\bT)$, that is 
$s_\alpha(\rho^\vee)-\rho^\vee=0$ in $Y(\bT)\otimes (\BQ/\BZ)_{p'}$.

\begin{lemma}  \label{c^i} For $c\in\CA$, let $\tau(c)$ be a lift of $c$ to
$N_\bG(\bT)$  which satisfies $(\flat)$. Then, for  a power $c^i$, the lift
$\tau(c^i)=\tau(c)^i$ satisfies $(\flat)$ unless $o(c^i)$ is odd, $o(c)$ is
even   and  $i/\gcd(i,o(c))$  is  odd.  In   this  last  case  the  lift
$\tau(c^i)=\tau(c)^i\iota(c^{o(c)/2})$ satisfies $(\flat)$.
\end{lemma}
\begin{proof}
First note that since the order of $c^i$ is $o(c)/\gcd(i,o(c))$, we have
$io(c^i)=i_0o(c)$ where $i_0=i/\gcd(i,o(c))$.

If $o(c^i)$ is even then so is $o(c)$ and we have
$(\tau(c)^i)^{o(c^i)}=\tau(c)^{i_0o(c)}=
\iota(c^{o(c)/2})^{i_0}=\iota(c^{i_0o(c)/2})=\iota((c^i)^{o(c^i)/2})$.

If  $o(c^i)$  is  odd  we  must  find  a  lift $\tau'(c^i)$  such  that
$\tau'(c^i)^{o(c^i)}=1$. We have
$\tau(c^i)^{o(c^i)}=(\tau(c)^{o(c)})^{i_0}$.  This  last  term  is $1$ if
$i_0$ is even since $\tau(c)^{o(c)}$ is of order 1 or 2. It is also $1$ if
$o(c)$ is odd since then $\tau(c)^{o(c)}=1$. In these cases we can take
$\tau'(c^i)=\tau(c)^i$.

If $o(c^i)$ is odd, $i_0$ odd and $o(c)$ even let us check what happens
with $\tau'(c^i)=\tau(c)^i\iota(c^{o(c)/2})$. We have
$$(\tau(c)^i\iota(c^{o(c)/2}))^{o(c^i)}=\tau(c)^{i_0o(c)}\iota(c^{o(c)/2})=
\iota(c^{o(c)/2})^{i_0}\iota(c^{o(c)/2})=1,$$
 the first equality since $o(c^i)$ is odd, the second since $\tau(c)$
 satisfies $(\flat)$, and the third since $i_0$ is odd.
\end{proof}
We now prove Proposition \ref{lifting2} type  by type. Note that for types $B$, 
$C$, and $D$ we use the labeling of \cite{chevie} which is reversed from the Bourbaki labeling.
\newcommand\nnode[1]{{\kern -0.6pt\mathop\bigcirc\limits_{#1}\kern -1pt}}
\newcommand\edge{{\vrule width10pt height3pt depth-2pt}}
\newcommand\vertbar[2]{\rlap{\kern4pt\vrule width1pt height17.3pt depth-7.3pt}
 \rlap{\raise19.4pt\hbox{$\kern -0.4pt\bigcirc\scriptstyle#2$}}
                 \nnode{#1}}
\subsubsection{Type $A_n$}\label{type A}$\nnode{1}\edge\nnode{2}\cdots\nnode{n}$

We may assume that  $\bG=\Sl_{n+1}$ and  let $S=\{s_1,\ldots,s_n\}$.  Then
$c=s_1\cdots s_n=w_0 w_I$ for $I=\{2,\ldots,n\}$ (to simplify the notation, we
will represent subsets of $S$ using only their indices, so $\{2,\ldots,n\}$
represents $\{s_2,\ldots,s_n\}$) is a generator of $\CA$.
If  $\bc$ is the lift to $\bW$ of  $c$ (resp.\ $\bw_0$ the lift of $w_0$) we
have    $\bc^{n+1}=\bw_0^2$.   Thus    applying   $\ts$    we   get
$\sigma(c)^{n+1}=\sigma(w_0)^2=\rho^\vee$, the last equality
by Corollary \ref{sigma(involution)}(ii).  
According  to  \cite[planche
I]{bbk}  we have $2\rho^\vee=\sum_{i=1}^n i(n-i+1)\alpha_i^\vee$ from which
it  follows that if $n$ is  even we have $\rho^\vee\in \BZ\Phi^\vee=Y(\bT)$
and  if $n$ is odd we have $\rho^\vee\equiv (\sum_{i=1}^n i\alpha_i^\vee)/2
\pmod{Y(\bT)}$.

Let us denote by $n_p$ and $n_{p'}$ respectively the $p$-part and the
$p'$-part of an integer $n$.
The group $Z\bG$ is generated by $\iota(c^{(n+1)_p})$, the image of
the  coweight of index $(n+1)_p$  in $Y(\bT)\otimes (\BQ/\BZ)_{p'}$, which,
still  according  to  \cite[planche  I]{bbk}  is  equal  to  $(\sum_{i=1}^n
-i\alpha_i^\vee)/(n+1)_{p'}\pmod{Y(\bT)}$. It follows that
$$\sigma(c)^{n+1}=\sigma(c)^{o(c)}=\rho^\vee=
\begin{cases}\iota(c^{o(c)_p})^{o(c)_{p'}/2}=\iota(c^{o(c)/2})&\text{when
$o(c)$    is   even,}\\    1&\text{otherwise,}\end{cases}$$ 
thus $\tau(c)=\sigma(c)$ has property $(\flat)$.
For the other elements $c^i$ of $\CA$ we take $\tau(c^i)$ according to Lemma
\ref{c^i}. Note that these elements commute since $\rho^\vee$ is either
trivial (when $n$ is even) or the element $-1\in Z\bG$ (when $n$ is odd).

\subsubsection{Type $B_n$}\label{type B}
$\nnode{1}\rlap{\kern 2pt$<$}{\rlap{\vrule width10pt height2pt depth-1pt}
\vrule width10pt height4pt depth-3pt}
\nnode{2}\edge\nnode{3}\cdots\nnode{n}$

We may assume that $\bG=\Spin_{2n+1}$ and  let $S=\{s_1,\ldots,s_n\}$. The group $\CA$ is
generated by $c=w_0w_I$ where $I=\{1,\ldots,n-1\}$. Since $c$ is
an involution we get $\sigma(c)^2=\rho^\vee-\rho^\vee_I$ by Corollary
\ref{sigma(involution)}(i). We use the notation and the 
formulae of \cite[Planche III]{bbk} except that the indexing of the simple roots is
reversed. We get 
$\rho^\vee=\sum_{i=1}^{i=n} i\varepsilon_i$ and 
$\rho_I^\vee=\sum_{i=2}^{i=n} (i-1)\varepsilon_i$, whence
$\sigma(c)^2=\sum_{i=1}^{i=n}\varepsilon_i\equiv
\alpha_1^\vee/2\equiv\varpi_n^\vee\equiv\iota(c)\pmod{Y(T)}$, whence Lemma
\ref{lifting2} in
this case with $\tau=\sigma$.
\subsubsection{Type $C_n$}\label{C}
$\nnode{1}\rlap{\kern 2pt$>$}{\rlap{\vrule width10pt height2pt depth-1pt}
\vrule width10pt height4pt depth-3pt} 
\nnode{2}\edge\nnode{3}\cdots\nnode{n}$

We may assume that $\bG=\Sp_{2n}$ and  let $S=\{s_1,\ldots,s_n\}$. On $V=k^{2n}$ with basis
$e_n,\ldots,e_1,e'_1,\ldots,e'_n$ we can realise $\bG$ as the group of isometries of the
symplectic form
$\langle e_i,e_j\rangle=\langle e'_i,e'_j\rangle=0$ and
$\langle e_i,e'_j\rangle=-\langle e'_i,e_j\rangle=\delta_{i,j}$. 
Taking for $\bT$ the diagonal torus,
the set $S$ consists of the permutation matrices $s_1=(e_1,e'_1)$ and
$s_i=(e_i,e_{i+1})(e'_i,e'_{i+1})$. The group $\CA$ is generated by 
the involution $c=w_0w_I$ where $I=\{2,\ldots n\}$. We choose the matrix $\tau(c)=\begin{pmatrix}
0&-\Id_n\\ \Id_n&0\end{pmatrix}$ as a representative of $c$.
We have $\tau(c)^2=-\Id_{2n}$ which is
Lemma \ref{lifting2} since $-\Id_{2n}=\iota(c)$ is the generator of the
centre of $\bG$ (the centre has order 2). Note that
$\tau(c)\ne\sigma(c)$ in general since a computation shows that $\sigma(c)^2=
(-1)^n\Id_{2n}$.

\subsubsection{Type  $D_{2r}$} \label{D even}
$\nnode{1}\edge\vertbar{3}{2}\edge\nnode{4}\cdots\nnode{2r}$

\newcommand\xa{A}
\newcommand\xb{B}
\newcommand\bxa{{\bf A}}
\newcommand\bxb{{\bf B}}
We may assume that $\bG=\Spin_{4r}$ and  let $S=\{s_1,\ldots,s_{2r}\}$.
The elements of $Z\bG$, viewed as a subgroup of $Y(\bT)\otimes(\BQ/\BZ)_{p'}$,
are the fundamental weights $1,\varpi^\vee_1,\varpi^\vee_2$
and $\varpi^\vee_n$ $\pmod{Y(\bT)}$, where $n=2r$. The non-trivial elements of
$\CA$ are the involutions
$a=w_0w_{\{2,\ldots, n\}}, b=w_0w_{\{1,3,\ldots, n\}}$ (they correspond to $c$ and
$c'$ below Lemma \ref{lifting2}) and $c=w_0w_{\{1\ldots n-1\}}$.
Using Corollary \ref{sigma(involution)}(i) we find that
$\sigma(a)^2=\varpi^\vee_1$, $\sigma(b)^2=\varpi_2^\vee$ and $\sigma(c)^2=
\varpi_n^\vee$.
We denote by $\ba,\bb, \bc$ the lifts to $\bW$ of $a,b,c$. Note that
$ab=ba=c$ but that these relations do not hold for their lifts to $\bW$, 
in particular $\ba$ and $\bb$ do not commute. However, we will show
that $\bc\bb$ (which lifts $a$) and $\bc\ba$ (which lifts $b$) commute.
We will take $\tau(a)=\ts(\bc\bb),
\tau(b)=\ts(\bc\ba)$ and $\tau(c)=\ts(\bc\bb\bc\ba)$. Let $\xa=s_2s_3\cdots
s_n$ and $\xb=s_1 s_3\cdots s_n$. We first observe that
$c=\xb\inv\xa=\xa\inv\xb$; to check the first equality we can
observe that $\alpha_1,\ldots,\alpha_{n-1}$ are all sent to positive roots
by $\xb\inv\xa$, so that $\xb\inv\xa$ is $W_{\{1,\ldots,n-1\}}$-reduced,
and it has length $2n-2=l(w_0)-l(w_{\{1\ldots n-1\}})$ (one has
$l(\xb\inv\xa)=l(\xb\inv)+l(\xa)$ since each one of $\xb$ and $\xa$ has
exactly one reduced expression and their first terms differ); thus
$\xb\inv\xa=c$,
the only $W_{\{1,\ldots,n-1\}}$-reduced element of its length. The additivity of the lengths
shows also that the element $\xb\inv\xa$ lifts to the braid group as $\tilde\bxb\bxa$.
Applying to $\xb\inv\xa=c$ the  symmetry which exchanges
$s_1$ and $s_2$ and leaves $c$ invariant, we get $\xb\inv\xa=\xa\inv\xb$;
which lifts to the braid group as
$\tilde\bxb\bxa=\tilde\bxa\bxb$. Let $I=\{3,\ldots,n\}$. In the
subgroup of type $A$ given by $W_{\{2\ldots n\}}$ we have the equalities
$w_{\{2\ldots n\}}=w_I\xa=\xa\inv w_I$ which lift to the braid group as
$\bw_{\{2\ldots n\}}=\bw_I\bxa=\tilde\bxa\bw_I$ and similarly in the group of type $A$
given by $W_{\{1,3\ldots n\}}$ we have $w_{\{1,3\ldots n\}}=w_I\xb=\xb\inv w_I$ which lifts
to the braid group as $\bw_{\{1,3\ldots n\}}=\bw_I\bxb=\tilde\bxb\bw_I$
whence,  if $\bw_I$ and  $\bw_0$ are the  lifts to $\bW$  of $w_I$ and
$w_0$ (note that $\bw_0$ is central), and $\bxa$ and $\bxb$ the lifts of
$\xa$ and $\xb$,
the equalities $\ba=\bw_0\bxa\inv\bw_I\inv$ and
$\bb=\bw_0\bxb\inv\bw_I\inv$. Since we have
$\bc=\tilde\bxb\bxa=\tilde\bxa\bxb$, it follows that $\bc\ba=
\tilde\bxb\bw_0\bw_I\inv$ and $\bc\bb=\tilde\bxa\bw_0\bw_I\inv$,
thus
$\bc\ba\bc\bb=\tilde\bxb\bw_0\bw_I\inv\tilde\bxa\bw_0\bw_I\inv=
\tilde\bxb\bxa\bw_0\bw_I\inv\bw_0\bw_I\inv=
\tilde\bxa\bxb\bw_0\bw_I\inv\bw_0\bw_I\inv=
\tilde\bxa\bw_0\bw_I\inv\tilde\bxb\bw_0\bw_I\inv=
\bc\bb\bc\ba$. 

We now show that $\tau(a)$ and $\tau(b)$  satisfy condition $(\flat)$ of
Lemma \ref{lifting2} (where we have called $a$ and $b$ the generators of
$\CA$  instead of $c$ and $c'$). 
Using the equalities
$\bw_0\bw_I\inv=\bxb\bb=\bb\tilde\bxb$ we get
$(\bc\ba)^2=\tilde\bxb\bw_0\bw_I\inv\bw_0\bw_I\inv\bxb=
\tilde\bxb\bxb\bb\bb\tilde\bxb\bxb$. If we apply $\ts$ and use that
$\ts(\bb^2)=\sigma(b)^2$ is central, we get
$\ts((\bc\ba)^2)=\ts(\tilde\bxb\bxb)\ts(\bxb\tilde\bxb)\sigma(b)^2$. Using
twice Proposition \ref{adams-vogan} we get
$\ts(\tilde\bxb\bxb)\ts(\bxb\tilde\bxb)=
\rho^\vee-(\xb(\rho^\vee)+\xb\inv(\rho^\vee))/2$.

\begin{lemma}
We have $\xb(\rho^\vee)=\rho^\vee-(n-1)\alpha_1-\sum_{i=3}^{i=n}(n-i+1)\alpha_i$ and
$\xb\inv(\rho^\vee)=\rho^\vee-\alpha_1-\sum_{i=3}^{i=n}(i-1)\alpha_i$.
\end{lemma}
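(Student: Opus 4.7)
The plan is a direct computation starting from the basic identity $s(\rho^\vee) = \rho^\vee - \alpha_s^\vee$ already used in the proof of Proposition \ref{adams-vogan}. Iterating this identity along any reduced expression $w = s_{i_1} \cdots s_{i_r}$ gives the standard telescoping formula
$$\rho^\vee - w(\rho^\vee) = \sum_{k=1}^{r} s_{i_1} s_{i_2} \cdots s_{i_{k-1}}(\alpha_{i_k}^\vee).$$
Since type $D$ is simply laced, roots and coroots coincide under the standard identification, which is consistent with the lemma being phrased in terms of $\alpha_i$ rather than $\alpha_i^\vee$.

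I first treat $\beta = s_1 s_3 s_4 \cdots s_n$. The word contains each simple reflection at most once, so its length is at most $n-1$, and a short inversion count confirms that this expression is reduced. In the Dynkin diagram of Subsection \ref{D even}, $s_1$ is adjacent only to $s_3$, so $s_1$ commutes with $s_4, \ldots, s_n$, while within the subchain $s_3, s_4, \ldots, s_n$ the usual type $A$ commutation rules apply. An easy induction on $k$ therefore gives
$$s_1 s_3 s_4 \cdots s_{k-1}(\alpha_k) = \alpha_1 + \alpha_3 + \alpha_4 + \cdots + \alpha_k$$
for $3 \le k \le n$, while the summand from the initial $s_1$ is simply $\alpha_1$. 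Collecting these contributions, $\alpha_1$ appears in all $n-1$ terms and, for $3 \le i \le n$, $\alpha_i$ appears in exactly $n-i+1$ of them, which yields the first formula.

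For $\beta^{-1} = s_n s_{n-1} \cdots s_4 s_3 s_1$ the computation is symmetric. Using that $s_k$ fixes $\alpha_1$ for $k \ge 4$ and that $s_3(\alpha_1) = \alpha_1 + \alpha_3$, a similar induction yields $s_n s_{n-1} \cdots s_{k+1}(\alpha_k) = \alpha_k + \alpha_{k+1} + \cdots + \alpha_n$ for $3 \le k \le n$, and $s_n s_{n-1} \cdots s_3(\alpha_1) = \alpha_1 + \alpha_3 + \alpha_4 + \cdots + \alpha_n$. Summing, $\alpha_1$ appears once, $\alpha_3$ twice, and $\alpha_i$ for $4 \le i \le n$ appears $i-1$ times; since $2 = 3-1$, this packages precisely as $\rho^\vee - \beta^{-1}(\rho^\vee) = \alpha_1 + \sum_{i=3}^{n}(i-1)\alpha_i$, the second formula.

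The only real obstacle is the bookkeeping: checking that both expressions are reduced and that the commutation patterns in the $D_{2r}$-diagram are applied correctly at each step. No deeper input beyond the single identity $s(\rho^\vee) = \rho^\vee - \alpha_s^\vee$ and simple-lacedness is required.
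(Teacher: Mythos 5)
Your computation is correct and follows essentially the same route as the paper: both amount to iterating $s_i(\rho^\vee)=\rho^\vee-\alpha_i^\vee$ along the word for $\beta$ (resp.\ $\beta^{-1}$), using the adjacency pattern $s_1(\alpha_3)=\alpha_1+\alpha_3$ and $s_i(\alpha_{i+1})=\alpha_i+\alpha_{i+1}$ in the chain. One small remark: the telescoping identity $\rho^\vee-w(\rho^\vee)=\sum_k s_{i_1}\cdots s_{i_{k-1}}(\alpha_{i_k}^\vee)$ is purely algebraic and holds for \emph{any} word, not just reduced ones, so the ``inversion count'' you mention to confirm reducedness is not actually needed.
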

\begin{proof}
We use repeatedly the formulae $s_i\rho^\vee=\rho^\vee-\alpha_i^\vee$ and
$s_i(\alpha_{i+1})=s_{i+1}(\alpha_i)=\alpha_i+\alpha_{i+1}$ for $i>2$, as well as
$s_1(\alpha_3)=s_3(\alpha_1)=\alpha_1+\alpha_3$.
\end{proof}
Using this lemma we get 
$\ts(\tilde\bxb\bxb)\ts(\bxb\tilde\bxb)=(n/2)\sum_{i\neq
2}\alpha_i\equiv0\pmod{Y(\bT)}$,
the last equality since $n/2=r\in\BZ$, 
so finally $\tau(b)^2=\ts((\bc\ba)^2)=\sigma(b)^2=\varpi_2^\vee=\iota(b)$.
By symmetry we get the similar result for $a$, whence Lemma
\ref{lifting2} in this case.
\subsubsection{Type $D_{2r+1}$}\label{D odd}
$\nnode{1}\edge\vertbar{3}{2}\edge\nnode{4}\cdots\nnode{2r+1}$

We may assume that $\bG=\Spin_{4r+2}$ and  let $S=\{s_1,\ldots,s_{2r+1}\}$.
The elements of $Z\bG$ are still $1,\varpi^\vee_1,\varpi^\vee_2$
and $\varpi^\vee_n$ $\pmod{Y(\bT)}$, where $n=2r+1$ but this time they
form a cyclic group of order 4 generated by $\varpi^\vee_1$ or
$\varpi^\vee_2$. The conjugation by $w_0$ induces the automorphism which
exchanges $s_1$ and $s_2$ and leaves the other $s_i$ invariant.
We keep from the case $D_{2r}$ the notation $I$, $\bxa$, $\bxb$, 
$\bw_0$, $\bw_I$ and $\ba$. 

\begin{lemma}\label{tilde-invariance} The braid
$(\tilde\bxb\ba\tilde\bxb\inv)^2$ is invariant by $\sim$.
\end{lemma}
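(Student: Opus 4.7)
The plan is to reduce the assertion to a braid commutator identity in $B(W)$ and then verify it using the conjugation action of $\bw_0$.

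The key first observation is that $a$ is an involution in $W$, so its positive lift $\ba\in\bW$ satisfies $\tilde\ba=\ba$: reversing a reduced expression for the involution $a$ yields another reduced expression for the same element, and the positive lift is independent of the chosen reduced expression. The same remark applies to $\bw_0$, $\bw_I$, and $\bw_{1,3\ldots n}$. Writing $Y=\tilde\bbeta\ba\tilde\bbeta\inv$, so $Y^2=\tilde\bbeta\ba^2\tilde\bbeta\inv$, a single application of the anti-automorphism~$\sim$ combined with $\tilde\ba=\ba$ gives $\widetilde{Y^2}=\bbeta\inv\ba^2\bbeta$. The lemma is therefore equivalent to the commutation $[\bbeta\tilde\bbeta,\ba^2]=1$ in $B(W)$.

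I would next put $\ba^2$ in a tractable form. The relation $\ba=\bw_0\balpha\inv\bw_I\inv$ from the $D_{2r}$ case carries over to $D_{2r+1}$ unchanged, since it rests only on the parabolic identities $\bw_{2\ldots n}=\bw_I\balpha=\tilde\balpha\bw_I$. The essential new feature of the odd case is that $w_0$ is no longer central in $W$: it realises the diagram automorphism exchanging $s_1$ and $s_2$ while fixing $s_3,\ldots,s_n$. By Deligne's formula $\bw_0\bs_i\bw_0\inv=\bs_{w_0(i)}$, this lifts to $\bw_0\balpha\bw_0\inv=\bbeta$, whereas $\bw_0$ commutes with $\bw_I$. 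Combining these with the centrality of $\bw_0^2$ in $B(W)$ and with the relation $\bw_{1,3\ldots n}=\bw_I\bbeta$, a short calculation yields
$$\ba^2=\bw_0^2\cdot\bw_{1,3\ldots n}\inv\,\balpha\inv\,\bw_I\inv.$$

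Since $\bw_0^2$ is central, the remaining task is to show that $\bw_{1,3\ldots n}\inv\balpha\inv\bw_I\inv$ commutes with $\bbeta\tilde\bbeta=\bw_I\inv\bw_{1,3\ldots n}^{2}\bw_I\inv$. The tools I would use are the braid identity $\tilde\bbeta\balpha=\tilde\balpha\bbeta$ (a lift of $\beta\inv\alpha=\alpha\inv\beta$, valid in $B(W)$ because $s_1$ and $s_2$ commute in $W(D_n)$), the centrality of $\bw_{1,3\ldots n}^2$ inside the parabolic braid group $B(W_{\{1,3,\ldots,n\}})$, and the $\bw_0$-conjugation swapping $\balpha$ and $\bbeta$.

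The principal obstacle I expect is precisely this final commutator verification: all the ingredients are in place, but the bookkeeping of the nested $\bw_I\inv$ and $\bw_{1,3\ldots n}^{\pm1}$ factors requires care, and it is not \emph{a priori} clear why $\ba^2$ rather than $\ba$ itself should commute with $\bbeta\tilde\bbeta$. Should the direct approach turn out to be unwieldy, a natural fallback is to attempt the stronger statement that $Y$ itself is $\sim$-invariant, or to reinterpret the required equality in the cocharacter lattice via Proposition~\ref{adams-vogan}.
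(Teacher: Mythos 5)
Your very first observation — that $a$ is an involution in $W$, so $\tilde\ba=\ba$ — is false in the case $D_{2r+1}$, which is exactly the case at hand. In $D_{2r+1}$ the longest element $w_0$ is not central (it acts by the diagram automorphism exchanging $s_1$ and $s_2$), and a direct computation, also carried out in Lemma~\ref{computation Dimpair}, gives $a^2=w_{1,3,\ldots,n}w_{2,\ldots,n}\neq 1$. Thus $a$ has order~$4$; this is precisely why $\CA\simeq\BZ/4\BZ$ here, in contrast to the $D_{2r}$ case where $a,b,c$ really are involutions. Concretely, from $\ba=\bw_0\balpha\inv\bw_I\inv$ one finds $\ba=\bbeta\inv\bw_I\inv\bw_0$ while $\tilde\ba=\balpha\inv\bw_I\inv\bw_0$, so $\tilde\ba\neq\ba$.

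This breaks the reduction that drives your whole argument. Writing $Y=\tilde\bbeta\,\ba\,\tilde\bbeta\inv$, one has $\widetilde{Y^2}=\bbeta\inv\widetilde{\ba^2}\bbeta$, and $\widetilde{\ba^2}=(\tilde\ba)^2\neq\ba^2$. The $\sim$-invariance of $Y^2$ is therefore equivalent to $\bbeta\tilde\bbeta\,\ba^2=\widetilde{\ba^2}\,\bbeta\tilde\bbeta$, not to the commutator $[\bbeta\tilde\bbeta,\ba^2]=1$ that you propose. Your fallback suggestion of proving $Y$ itself $\sim$-invariant fails for the same reason. The paper avoids the issue entirely: after stripping off the central $\bw_0^2$, it rewrites the remaining braid in the form $\tilde\bbeta\,\bw_I\,(\balpha\bbeta\inv)\,\bw_I\,\bbeta$ — i.e. of the shape $\tilde u\,z\,u$ with $z=\bw_I(\balpha\bbeta\inv)\bw_I$ manifestly $\sim$-invariant (using $\widetilde{\balpha\bbeta\inv}=\balpha\bbeta\inv$, which comes from the identity $\tilde\balpha\bbeta=\tilde\bbeta\balpha$ that you correctly noted, together with $\widetilde{\bw_I}=\bw_I$). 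Your computation of $\ba^2=\bw_0^2\bw_{1,3\ldots n}\inv\balpha\inv\bw_I\inv$ is correct and in fact matches the paper's intermediate step, but the conceptual framing around it needs to be replaced by the $\tilde u\,z\,u$ normal form rather than a commutator identity.
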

\begin{proof}
We will use the facts that the conjugation by $\bw_0$ fixes $\bw_I$ and
exchanges $\bxa$ and $\bxb$ and also that
$\bw_I\bxa=\tilde\bxa\bw_I$ and
$\bw_I\bxb=\tilde\bxb\bw_I$. We have
$\ba=\bw_0\bxa\inv\bw_I\inv$, hence $\ba^2=\bw_0^2\bxb\inv\bw_I\inv\bxa\inv\bw_I\inv=
\bw_0^2\bxb\inv\bw_I\inv\bxa\inv\bw_I\inv$.
Using that $\bw_0^2$ is central we deduce
$\tilde\bxb\ba^2\tilde\bxb\inv=\bw_0^2\tilde\bxb\bxb\inv\bw_I\inv\bxa\inv\bw_I\inv\tilde\bxb\inv$
We can forget $\bw_0^2$ which is central and $\sim$-invariant. 
Hence we have to prove that
$\tilde\bxb\bxb\inv\bw_I\inv\bxa\inv\bxb\inv\bw_I\inv=
\tilde\bxb\bxb\inv\bw_I^{-2}\tilde\bxa\inv\tilde\bxb\inv$
is $\sim$-invariant. For simplicity we show the property on the inverse
$\tilde\bxb\tilde\bxa\bw_I^2\bxb\tilde\bxb\inv$.
We have $\bw_I\bxb=\bw_{\{1,3\ldots n\}}$, the conjugation by which flips the Dynkin diagram
supported $1,3\ldots n$. Thus $\bw_I\bxb\tilde\bxb\inv=\bxb\inv\bw_I\bxb$,
whence
$\tilde\bxb\tilde\bxa\bw_I^2\bxb\tilde\bxb\inv=
\tilde\bxb\tilde\bxa\bw_I\bxb\inv\bw_I\bxb=
\tilde\bxb\bw_I\bxa\bxb\inv\bw_I\bxb$. We now use the fact that 
$\tilde\bxa\bxb=\tilde\bxb\bxa$ which shows that $\bxa\bxb\inv$
is $\sim$-invariant. Since $\bw_I$ is $\sim$-invariant, we get that 
$\tilde\bxb\bw_I\bxa\bxb\inv\bw_I\bxb$ is also
$\sim$-invariant, whence the lemma. 
\end{proof}
Lemma \ref{tilde-invariance} allows us to compute 
$\ts(\tilde\bxb\ba\tilde\bxb\inv)^4$ using Proposition \ref{adams-vogan}.
\begin{lemma}\label{computation Dimpair}
We have 
$\ts(\tilde\bxb\ba\tilde\bxb\inv)^4=(\rho^\vee-(\xb\inv
a\xb)^2(\rho^\vee))/2\equiv(\alpha_1^\vee+\alpha_2^\vee)/2\pmod{Y(\bT)}$.
\end{lemma}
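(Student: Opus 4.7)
The approach is to combine Proposition \ref{adams-vogan} with Lemma \ref{tilde-invariance}. Set $\bb:=(\tilde\bbeta\ba\tilde\bbeta\inv)^2$. By Lemma \ref{tilde-invariance} one has $\tilde\bb=\bb$, so Proposition \ref{adams-vogan} applied to $\bb$ yields
$$\ts(\bb)^2=\ts(\bb)\ts(\tilde\bb)=(\rho^\vee-w(\rho^\vee))/2,$$
where $w\in W$ is the image of $\bb$. Since $\tilde\bbeta$, $\ba$, $\tilde\bbeta\inv$ lift respectively $\beta\inv$, $a$, $\beta$, this image equals $w=(\beta\inv a\beta)^2=\beta\inv a^2\beta$; note that in type $D_{2r+1}$ the element $a$ has order $4$ so $a^2$ is a nontrivial involution of $W$, which is consistent with $\tilde\bb=\bb$ forcing $w$ to be an involution. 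Since $\ts(\tilde\bbeta\ba\tilde\bbeta\inv)^4=\ts(\bb)^2$, this establishes the first equality of the lemma.

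For the second equality one must show $\rho^\vee-(\beta\inv a\beta)^2(\rho^\vee)\equiv\alpha_1^\vee+\alpha_2^\vee\pmod{2Y(\bT)}$, where $Y(\bT)=\BZ\Phi^\vee$ is the coroot lattice of $\Spin_{4r+2}$. A first simplification is $a^2=w_{1,3\ldots n}w_{2\ldots n}$, coming from the fact that conjugation by $w_0$ in type $D_{2r+1}$ is the diagram automorphism swapping $s_1,s_2$ and fixing $s_3,\dots,s_n$. From here I would proceed either by iteratively computing $(\beta\inv a^2\beta)(\rho^\vee)$ using $s_i(\rho^\vee)=\rho^\vee-\alpha_i^\vee$ together with the standard $s_i$-action on simple coroots (with extra care at the trivalent node), or by invoking the involution inversion-set formula $\rho^\vee-w(\rho^\vee)=\sum_{\alpha>0,\,w\alpha<0}\alpha^\vee$ and enumerating the positive roots inverted by $\beta\inv a^2\beta$. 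After reduction modulo $2\BZ\Phi^\vee$, the resulting sum should collapse to $\alpha_1^\vee+\alpha_2^\vee$.

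The hard part is this explicit combinatorial calculation in the $D_{2r+1}$ root system: because $\beta=s_1s_3\cdots s_n$ passes through the branching node, the inversion set of $\beta\inv a^2\beta$ is not easily described, and the reduction modulo $2\BZ\Phi^\vee$ requires careful bookkeeping. A reasonableness check is that $Z\bG$ is cyclic of order $4$, so $\iota(a^2)$ is its unique element of order $2$; condition $(\flat)$ applied at the next stage will demand $\tau(a)^4=\iota(a^2)$, and this unique order-$2$ element is represented in $Y(\bT)\otimes\BQ$ by precisely $(\alpha_1^\vee+\alpha_2^\vee)/2\pmod{Y(\bT)}$, matching the target of the lemma.
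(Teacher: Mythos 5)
Your derivation of the first equality is correct and is precisely what the paper leaves implicit: you set $\bb=(\tilde\bbeta\ba\tilde\bbeta\inv)^2$, invoke Lemma \ref{tilde-invariance} to get $\tilde\bb=\bb$, apply Proposition \ref{adams-vogan} to obtain $\ts(\bb)^2=\ts(\bb)\ts(\tilde\bb)=(\rho^\vee-w(\rho^\vee))/2$ with $w$ the image of $\bb$, and correctly read off $w=(\beta\inv a\beta)^2$ from the fact that $\tilde\bbeta$ lifts $\beta\inv$. Your identity $a^2=w_{1,3\ldots n}w_{2\ldots n}$ is also correct (it follows from $a=w_0w_{2\ldots n}$, $w_0^2=1$, and $w_0$ acting as the $s_1\leftrightarrow s_2$ diagram automorphism).

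However, for the second equality you only sketch two possible routes (iterating $s_i(\rho^\vee)=\rho^\vee-\alpha_i^\vee$, or using the inversion-set formula) and assert that the answer ``should collapse'' to $(\alpha_1^\vee+\alpha_2^\vee)/2$; neither computation is actually carried out. The ``reasonableness check'' you give is not a proof: it presupposes that $\sigma(a)^4$ lies in $Z\bG$ and equals $\iota(a^2)$, which is exactly the content of Lemma~\ref{lifting2} that the present lemma is being used to establish, so the argument would be circular. The paper closes this gap with a short explicit computation in the $\varepsilon$-basis of Bourbaki's Planche IV: $\beta$ acts by $\varepsilon_i\mapsto\varepsilon_{i+1}$ for $2\le i\le n-1$, $\varepsilon_1\mapsto-\varepsilon_2$, $\varepsilon_n\mapsto-\varepsilon_1$; the element $a^2$ negates $\varepsilon_1$ and $\varepsilon_n$ and fixes the rest; and $\rho^\vee=\sum_{i=2}^n(i-1)\varepsilon_i$, giving $(\rho^\vee-\beta\inv a^2\beta(\rho^\vee))/2=(n-1)\varepsilon_n+(n-2)\varepsilon_{n-1}$, which one then rewrites in terms of simple coroots and reduces modulo $Y(\bT)$. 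You should supply a computation of this kind; as it stands, the key congruence is unproven.
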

\begin{proof}We put $n=2r+1$ and
use the basis $(\varepsilon_i)_{i=1,\ldots,n}$ of $Y(\bT)$ (see
\cite[Planche IV]{bbk} but with the numbering as in the diagram above).
In this basis $\xb$ maps $\varepsilon_i$ to $\varepsilon_{i+1}$ for
$i=2,\ldots,n-1$, and maps $\varepsilon_1$ to $-\varepsilon_2$ and
$\varepsilon_n$ to $-\varepsilon_1$. Using
 $a^2=w_0^2w_{\{1,3,\ldots,n\}}w_{\{2,3,\ldots,n\}}=w_{\{1,3,\ldots,n\}}w_{\{2,3,\ldots,n\}}$
we see that $a^2$ changes the signs of $\varepsilon_1$ and $\varepsilon_n$ and
fixes the other $\varepsilon_i$. Thus $B\inv a^2 B$ changes $\varepsilon_1$,
$\varepsilon_{n-1}$ and $\varepsilon_n$ into their opposites and fixes the other
$\varepsilon_i$. We have
$\rho^\vee=\sum_{i=2}^{i=n}(i-1)\varepsilon_i$.
Thus we get $(\rho^\vee-\xb\inv
a^2\xb(\rho^\vee))/2=(n-1)\varepsilon_n+(n-2)\varepsilon_{n-1}=
(n-1)\alpha_n^\vee+(2n-3)(\alpha_{n-1}^\vee+\cdots+\alpha_3^\vee)+
((2n-3)/2)(\alpha_1^\vee+\alpha_2^\vee)\equiv(\alpha_1^\vee+\alpha_2^\vee)/2\pmod{Y(\bT)}$.
\end{proof}
Lemma \ref{computation Dimpair} gives Lemma \ref{lifting2} for $a$ with
$\tau(a)=\sigma(a)$: since $\ts(\tilde\bxb \ba\tilde\bxb\inv)^4$ is central, we have
$\ts(\ba)^4=\ts(\tilde\bxb \ba\tilde\bxb\inv)^4$, hence $\tau(a)^4=
(\alpha_1^\vee+\alpha_2^\vee)/2=\varpi_n^\vee=2\varpi_1^\vee=\iota(a^2)$.

Using Lemma \ref{c^i},
for $a^2$ we can take $\tau(a^2)=\ts(\ba)^2=\ts(\ba^2)$ and for
$a^3$ we can take $\tau(a^3)=\ts(\ba)^3=\ts(\ba^3)$.

\subsubsection{Type $E_6$} 
$\nnode1\edge\nnode3\edge\vertbar42\edge\nnode5\edge\nnode6$

Here $\CA$ is generated by $c=w_0w_I$  where $I=\{1,\ldots,5\}$. Lifting to
$B(W)$ we check using \cite{chevie} that $\bc^3=\bw_0^2\bw_J^{-2}$ where 
$J=\{2,\ldots,5\}$ is of type $D_4$: here is the code with the \Chevie\ package 
of Julia:
\begin{verbatim}
julia> W=coxgroup(:E,6);B=BraidMonoid(W);

julia> c=B(longest(W))/B(longest(W,1:5))
1342543165423456

julia> c^3==B(longest(W))^2/B(longest(W,2:5))^2
true
\end{verbatim}
Applying $\ts$ it follows that
$\sigma(c^3)=\rho^\vee-\rho^\vee_J$ by Corollary \ref{sigma(involution)}(i).
Looking  at  \cite[Planche IV and
Planche  V]{bbk} we see that 
both $\rho^\vee$  and $\rho^\vee_J$ are  in $Y(\bT)$. Thus
$\sigma(c^3)=1$  which agrees with $\iota(c^3)=1$, whence Lemma \ref{lifting2} for $c$ with
$\tau(c)=\sigma(c)$. For $c^2$ we can take $\tau(c^2)=\sigma(c)^2$ according to
Lemma \ref{c^i}.

\subsubsection{Type $E_7$}\label{E7}
$\nnode1\edge\nnode3\edge\vertbar42\edge\nnode5\edge\nnode6\edge\nnode7$

Here, $\CA$ is generated by $c=w_0w_I$  where $I=\{1,\ldots,6\}$. Since $c$
is an involution, Corollary \ref{sigma(involution)}(i) gives
$\sigma(c)^2=\rho^\vee-\rho_I^\vee$.
We have seen (in case $E_6$) that $\rho_I^\vee\in Y(\bT)$. From
\cite[planche VI]{bbk} we find that
$\rho^\vee\equiv(\alpha^\vee_2+\alpha^\vee_5+\alpha^\vee_7)/2 \pmod{Y(\bT)}$.
We also find that $\varpi^\vee_7=\iota(c)$
has the same value, whence Lemma \ref{lifting2} in this case with $\tau=\sigma$.

\subsubsection*{Types $E_8$, $F_4$ and $G_2$} There is nothing to do since both
the groups $\CA$ and $Z\bG$ are trivial.
\section{Finite reductive groups}
We assume now that $\bG$ is a connected reductive group defined over an algebraic
closure $k$ of a finite field $\Fq$ and that $F$ is a Frobenius root, that is 
an endomorphism such that some power is a Frobenius endomorphism attached to an $\Fq$-structure on $\bG$. 

We assume further that $s$ is a semisimple element such that $C_\bG(s)$ is $F$-stable (which is
the case in particular if $s\in\bG^F$; however our more general assumption
turns out to be more suited to a reduction argument); the sequence
$$1\to C_\bG(s)^{0F}\to C_\bG(s)^F\to A_\bG(s)^F\to 1\eqno(\sharp)$$ 
is exact, since in the Galois cohomology exact sequence the next term
$H^1(F,C_\bG(s)^0)$ is trivial by Lang's theorem (see \cite[4.2.9]{DM}).

We recall that a quasi-split maximal torus of $\bG$ is an $F$-stable maximal
torus which is in
an $F$-stable Borel subgroup. The quasi-split maximal tori of $\bG$ are conjugate
under $\bG^F$.
\begin{lemma}\label{sharpsharp}
Let $\bT$ be a quasi-split maximal torus of $C_\bG(s)^0$; then 
the following sequence is exact
$$1 \to N_{C_\bG(s)^0}(\bT)^F \to  N_{C_\bG(s)}(\bT)^F\to A_\bG(s)^F\to1.\eqno{(\sharp\sharp)}$$
\end{lemma}
\begin{proof} The sequence $(\sharp)$ restricted to $N_{C_\bG(s)}(\bT)$ gives the sequence
$(\sharp\sharp)$ if we can prove the surjectivity of the
third morphism.  Let us show this. An element $a\in
A_\bG(s)^F$ is the image of some $x\in C_\bG(s)^F$. Since $\bT$ is quasi-split and $x$
is $F$-stable, the torus $\lexp x\bT$ is also quasi-split, so that $\lexp x\bT=\lexp y\bT$ with
$y\in C_\bG(s)^{0F}$. Then $y\inv x$ is in $N_{C_\bG(s)}(\bT)^F$ and has image $a$ in
$A_\bG(s)^F$.
\end{proof}
\begin{lemma}\label{AW(s)F}
Let $\bT$ be a quasi-split maximal torus of $C_{\bG}(s)$, let 
$W(s)=N_{C_\bG(s)}(\bT)/\bT$ and 
$W^0(s)=N_{C_\bG(s)^0}(\bT)/\bT$. Let $\Phi^+(s)$ be a system of positive
roots of $C_\bG(s)^0$ corresponding to an $F$-stable Borel subgroup
containing $\bT$. We have a split exact sequence
$$1\to W^0(s)^F\to W(s)^F\to A_\bG(s)^F\to 1$$
 where $A_\bG(s)^F$ lifts to $A_W(s)^F=\{w\in W(s)^F\mid \lexp
 w\Phi^+(s)=\Phi^+(s)\}$.
\end{lemma}
\begin{proof}
Since $\bT$ is connected the quotient by $\bT^F$ of the exact sequence $(\sharp\sharp)$
gives the exact sequence of the lemma. 
By Proposition \ref{AW(s)}
an element of $A_\bG(s)^F$ lifts to a unique element of $A_W(s)$ which is thus $F$-fixed,
whence the splitting of the exact sequence.
\end{proof}
We will show that the
sequence $(\sharp\sharp)$ of Lemma \ref{sharpsharp} splits. This implies that
the sequence $(\sharp)$ splits.

\subsection{Reduction to semisimple groups}
\begin{proposition}\label{derivedF}
Let $s\in\bG$ be a semisimple element such
that $C_\bG(s)$ is $F$-stable and let $\bT$ be a quasi-split maximal torus of 
$C_\bG(s)^0$. Let
$\bG'$ be the derived group of $\bG$ and let $\bT'=\bT\cap\bG'$. If for any
$s'\in\bT'$  such that $C_{\bG'}(s')$ is  $F$-stable and $\bT'$ is quasi-split in
$C_{\bG'}(s')^0$, we have a semidirect product
decomposition $N_{C_{\bG'}(s')}(\bT')^F=N_{C_\bG'(s')^{0}}(\bT')^F\rtimes A'_0$,
then we have a semidirect decomposition
$N_{C_\bG(s)}(\bT)^F=N_{C_\bG(s)^0}(\bT)^F\rtimes A_0$.
\end{proposition}
\begin{proof}
As  in  the  proof  of  Proposition  \ref{derived},  we  write $s=s'z$ with
$s'\in\bG'$  and  $z\in(Z\bG)^0$. 
Since  $C_{\bG'}(s')=C_\bG(s)\cap\bG'$, this  group is  $F$-stable. Thus, by
assumption, we have a decomposition
$N_{C_{\bG'}(s')}(\bT')^F=N_{C_{\bG'}(s')^0}(\bT')^F\rtimes  A_0$.
By  the proof  of Proposition  \ref{derived}, the  group $A_0$ does not meet
$N_{C_\bG(s)^0}(\bT)^F$ thus $N_{C_\bG(s)}(\bT)^F\supset N_{C_\bG(s)^0}(\bT)^F\rtimes A_0$.
Since $W(s)=W(s')$ and $W^0(s)=W^0(s')$, Lemma \ref{AW(s)F} shows that
$|A_\bG(s)^F|=|A_{\bG'}(s')^F|=|A_0|$. This shows that
$|N_{C_\bG(s)}(\bT)^F|/|N_{C_\bG(s)^0}(\bT)^F|=|A_0|$, whence the result.
\end{proof}

\begin{remark}  
Note that the above proof of Proposition \ref{derivedF} would not have worked with the
assumption   $s\in\bG^F$  since   there  exist   examples  where  for  some
$s\in\bG^F$  there is no $z\in Z\bG$  such that $sz\in\bG^{\prime F}$. For example, take
$\bG=\GL_2$  with  $F$  the  natural $\BF_q$-structure. Take $s=\diag(a,b)$
with  $a,b\in\BF_q$  such  that  $ab$  is  not  a  square  in  $\BF_q$. Let
$z=\diag(c,c)$.  The  condition  that  $z=\lexp  Fz$ is $c\in\BF_q$ and the
condition  $sz\in\bG'$ is $\det(sz)=1$ or  $c^2ab=1$ which contradicts that
$ab$ is not a square.
\end{remark}

\subsection{Tits lifting}
\begin{proposition} \label{Tits F-fixed} 
Let $\bT$ be an $F$-stable maximal torus of $\bG$ and assume that there exists
a Borel subgroup $\bB$ containing $\bT$ such that $F$ acts by a diagram
automorphism $\varphi$  on the corresponding Coxeter system $(W,S)$.
Then there exists a $(\varphi,F)$-equivariant Tits lifting $W\xrightarrow\sigma N_\bG(\bT)$.
\end{proposition}
\begin{proof}
Let $\Pi$ be the basis of roots corresponding to the choice of $\bB$.
By \cite[proof of 3.3]{Tits}, if for each $\alpha\in\Pi$ we choose a
representative $\dot s_\alpha$ of $s_\alpha$
in the subgroup $\bG_\alpha$ of $\bG$ generated by $\bU_\alpha$ and
$\bU_{-\alpha}$ (the one-dimensional unipotent subgroups corresponding to
the roots $\alpha$ and $-\alpha$ normalised by $\bT$), the elements $\{\dot s_\alpha\}_{\alpha\in\Pi}$
satisfy the braid relations, hence define a Tits lifting by lifting 
reduced expressions.  Let $\{\bG_{\alpha_1},\ldots,\bG_{\alpha_d}\}$ be
the orbit of $\bG_{\alpha_1}$ under $F$ for $\alpha_1\in\Pi$.
Since $s_{\alpha_1}$ is $\varphi^d$-fixed,
we can choose an $F^d$-fixed $\dot s_{\alpha_1}$ by Lang's theorem used in
$\bG_{\alpha_1}\cap\bT$, a maximal torus of $\bG_{\alpha_1}$. Then
$\dot s_{\alpha_1},F(\dot s_{\alpha_1}),\ldots,F^{d-1}(\dot s_{\alpha_1})$ 
defines a $(\varphi,F)$-equivariant lifting for the orbit of $\bG_{\alpha_1}$ and
we make a similar choice for the other orbits.
\end{proof}
Note that the above $\sigma$ defines a $(\varphi,F)$ equivariant
$\ts: B(W)\to N_\bG(\bT)$.

\subsection{Case of semisimple groups}\label{case semisimple}
Let us first describe a general construction in a reductive group
$\bG$ with a Frobenius root $F$.

Let $\bT$ be an $F$-stable maximal torus of $\bG$ such that there exists
a Borel subgroup containing $\bT$ such that $F$ acts by a diagram automorphism
$\varphi$ on the corresponding Coxeter system $(W,S)$.
Note that this does {\em not} mean that there is an $F$-stable Borel subgroup
containing $\bT$. For example, in an quasi-simple group of type
$\lexp 2A_n,\lexp 2E_6,\lexp 2D_{2r+1}$ we may take a torus of type $w_0$
for which $F$ acts trivially on $W$.

For a semisimple element $s_1\in\bG$
such that $C_\bG(s_1)$ is $F$-stable, we want to lift $A_\bG(s_1)^F$.
Let $\bT_1$ be an $F$-stable maximal torus of $C_\bG(s_1)$; it contains $s_1$ and
there exists $g\in\bG$ which conjugates
$\bT_1$ to $\bT$. Then $g$ conjugates the quadruple $(\bG,\bT_1,F,s_1)$
to a quadruple $(\bG,\bT,\dot v_0 F,s_0)$ where $\dot v_0=g\cdot\lexp Fg\inv\in
N_\bG(\bT)$ and $s_0\in\bT$. 

The element $g$ is determined up to left multiplication by $N_\bG(\bT)$. Changing
$g$ to $\dot v g$ for $\dot v\in N_\bG(\bT)$ changes
the quadruple $(\bG,\bT,\dot v_0 F,s_0)$ into $(\bG,\bT,\dot v\dot v_0 F\dot v\inv,
\lexp vs_0)$, where $v$ is the image of $\dot v$ in $W$. 

As in the first section we represent $s_0$ by $\lambda_0\in Y(\bT)\otimes\BQ$.
Replacing $s_0$ by a $W$-conjugate $\lexp vs_0$ and $\lexp v\lambda_0$ by 
a $Y(\bT)$-translate we can get a $\lambda\in\CC$ representing $\lexp vs_0$.
We set $s=\lexp v s_0$.

Conjugating $\bT_1$ by $h\in C_\bG(s_1)^0$ to another $F$-stable maximal torus containing $s_1$, 
amounts to changing $\dot v \dot v_0 \cdot\lexp F{\dot v\inv}$ into $\dot u \dot v
\dot v_0 \cdot\lexp F{\dot v\inv}$, where $\dot
u=\lexp{\dot vg}(h\inv\cdot\lexp Fh) \in N_{C_\bG(s)^0}(\bT)$; indeed
$\dot vgh\inv\cdot\lexp Fh\lexp F (\dot vg)\inv=
\lexp {\dot vg}(h\inv\cdot\lexp Fh) \dot v g\lexp F (\dot vg)\inv=\dot u \dot v
\dot v_0 \cdot\lexp F{\dot v\inv}$.
 By Lang's theorem in $C_\bG(s_1)^0$, the element $\dot u$ can be any element of
$N_{C_\bG(s)^0}(\bT)$ when $h$ varies.

Since $C_\bG(s_1)$ is $F$-stable, $C_\bG(s)$ is $\dot v \dot v_0 F{\dot v\inv}$-stable.

We choose $\Phi^+(s)$ as determined by the same basis
$(\Pi\cap \Phi(s))\cup\{\tilde\alpha_i\mid
\langle\lambda,\tilde\alpha_i\rangle=-1\}$ of $\Phi(s)$ as in the first
section.

If $F$ is a Frobenius endomorphism, $\varphi$ induces an automorphism of $\Phi(s)$; for a
general Frobenius root it may happen that $\varphi$ sends a root to a positive rational
multiple of a root.

For an appropriate choice of $\dot u$, hence of $h$, the image $w\in W$ of 
$\dot w:=\dot u \dot v \dot v_0 \cdot\lexp F{\dot v\inv}\in N_\bG(\bT)$ 
is such that $w\varphi$ stabilises the positive cone $\BQ^+\Phi^+(s)$.

We  have thus managed to get a quadruple $(\bG,\bT,\dot wF,s)$ conjugate to
$(\bG,\lexp  h\bT_1,F,s_1)$ such  that $C_\bG(s)$  is $\dot  w F$-stable and
the  positive cone  $\BQ^+\Phi^+(s)$ is $w\varphi$-stable
and $s$ is represented by
$\lambda\in  \CC$. 
Finally note that by multiplying the conjugating element
$g$  on  the  left  by  $t\in\bT$  we  multiply  $\dot  w$  on  the left by
$t\cdot\lexp{wF}t\inv$,  that is  by any  element of  $\bT$. So  we can take any
representative $\dot w$ of $w$.

Note that $w\varphi$ stabilising $\BQ^+\Phi^+(s)$ implies that
$F$ stabilises a Borel subgroup of $C_\bG(s_1)$ containing $\lexp h\bT_1$, 
thus $\lexp h\bT_1$ is quasi-split in $C_\bG(s_1)$. 
Thus if we lift $A_\bG(s)^{\dot wF}$, which we will do in Proposition \ref{group lifting'},
by geometric conjugacy by the element $g$
this lifts $A_\bG(s_1)^F$ and shows that the sequence $(\sharp\sharp)$ splits.

The sequence $1\to Z\to\Gsc\xrightarrow\pi\bG\to 1$ below \ref{decomposition} can be
chosen $F$-equivariant (see \cite[9.16]{steinberg}). Since $F$ acts by a diagram automorphism on $W$, 
the decomposition $\Gsc=\bG_1\times\cdots\times\bG_n$ in the proof of
Proposition \ref{lifting} is $F$-stable, and
the corresponding decomposition $\CA=\CA_1\times\cdots\times\CA_n$ is $\varphi$-stable.
We decompose $\Gsc$ into a product $\bH_1\times\cdots\times\bH_m$ where each
$\bH_i$ is the product of one orbit of $\bG_i$ under $F$ and we decompose $\CA$
accordingly into $\CA=\CB_1\times\cdots\times\CB_m$.

\newcommand\tA{{\tilde{\CA}}}
The group $A_W(s)^{w\varphi}$ is a subgroup of $\CA$. 
Let $\pi_i$ be the projection $\CA\to\CB_i$. We will lift to 
$N_\Gsc(\Tsc)^{\dot wF}$ a larger group, 
the group $\tA=\prod_i\pi_i(A_W(s)^{w\varphi})$, so that the elements of
the lift satisfy $(\flat)$ which will imply
that the lift composed with $\pi$ is a group morphism.
In particular
\begin{proposition}\label{group lifting'} 
There exists a representative $\dot w\in N_\bG(\bT)$ of $w$ such that
we can lift $A_W(s)^{w\varphi} $ by a group morphism to $N_\bG(\bT)^{\dot wF}$.
\end{proposition}
\begin{proof}
If  $\ch k=2$,  we may,  as in  the first  section lift  $W$ by a Tits
lifting $\sigma$ which is a group morphism in that case; and this lifting may
be chosen
$(\varphi,F)$-equivariant by Proposition \ref{Tits F-fixed}. The centraliser of $w\varphi$ lifts
to the centraliser of $\dot wF$ if we take $\dot w=\sigma(w)$. This lifts
$A_W(s)^{w\varphi}$.

We assume from now on that $\ch k$ is odd.

A part of the proof, that of Lemma \ref{lifting'} in the case of simply 
connected quasisimple groups, will be done in the next subsection.
\begin{lemma} \label{lifting'} 
For any $w\in W$ and any $w\varphi$-fixed subgroup $\tA$ of $\CA$ such that
$\tA=\Pi_i\pi_i(\tA)$,
there exists a representative $\dot w\in N_\Gsc(\Tsc)$ of $w$ and a lift
$\tau$ of $\tA$ to a set of commuting elements  of 
$N_\Gsc(\Tsc)^{\dot w F}$ such  that for each  $a\in\tA$ the element
$\tau(a)$ has property $(\flat)$.
\end{lemma}
\begin{proof}
We show that, assuming Lemma \ref{lifting'} holds for the product $\bH_i$ of an orbit under $F$ of
quasi-simple groups (which we will show in the next subsection, see Lemma \ref{orbit} and
Proposition \ref{lifting''}), it holds for $\Gsc$.
If there exists $\tau$ as in the statement for each component
$\pi_i(\tA)$, then, by Lemma \ref{product},
defining $\tau$ component by component yields a $\tau$ satisfying $(\flat)$
since the components $\pi_i(\tA)$ commute with one another. 
\end{proof}
We  now deduce  Proposition \ref{group  lifting'} from Lemma \ref{lifting'}
following  the same steps  as for deducing  Proposition \ref{group lifting}
from Proposition \ref{lifting}: we consider first $\pi(\tau(\tA))$ which,
by condition $(\flat)$, is a set of commuting elements in
$N_\bG(\bT)^{\pi(\dot  w)F}$ such that for any $a\in \tA$ the order
$o(a)$   is  equal  to  the   order  of  $\pi(\tau(a))$,  then  decomposing
$\tA$   into  a  direct  product  of  cyclic  groups  as  in  Lemma
\ref{tau2},  we define a lift which  is a group morphism from $\tA$
to $N_\bG(\bT)^{\pi(\dot w)F}$.
\end{proof}
Note that, even if $F$ is a Frobenius endomorphism, $\iota:\CA_{p'}\to Z\Gsc$ is \emph{not} 
usually $F$-equivariant, since
$F$  acts by $\varphi$  on $\CA_{p'}$  and by  $\varphi$ composed with
the raising of elements  to the $q$-th
power  on $Z\Gsc$. However the  only element of the  image of $\iota$ which
occurs  in condition $(\flat)$ is  of order  2, thus raising to the
$q$-th power is trivial on it since $q$ is odd.

\subsection{Proof of Lemma \ref{lifting'}}
The next lemma reduces the proof of Lemma \ref{lifting'} to the case of one $F$-stable
quasisimple simply connected group.
\begin{lemma} \label{orbit}Let $\bG$ be simply connected such that
$\bG=\bG_1\times\cdots\times\bG_d$, where $F$ maps $\bG_i$ to $\bG_{i+1}$, the indices
being modulo $d$. Let $w=(w_1,\ldots,w_d)\in W$ and $\tA$ be a $wF$-fixed subgroup of
$\CA$. Let $v=w_1\cdot\lexp F w_d\cdot\lexp{F^2}w_{d-1}\cdots\lexp{F^{d-1}}w_2$;
assume that Lemma \ref{lifting'} holds for the pair $(\bG_1,F^d)$, the 
projection of $\tA$ on the first component and $v$,
then Lemma \ref{lifting'} holds for $(\bG,F)$, the group $\tA$ and $w$.
\end{lemma}
\begin{proof} We have $\bG^F=\bG_1^{F^d}\times\lexp F\bG_1^{F^d}\times
\cdots\times\lexp{F^{d-1}}\bG_1^{F^d}$.
The restriction to $\bG^F$ of the projection of $\bG$ onto $\bG_1$ is an isomorphism from
$\bG^F$ to $\bG_1^{F^d}$ and induces an isomorphism  $N_\bG(\bT)^{\dot wF}\simeq
N_{\bG_1}(\bT_1)^{\dot v F^d}$, if $\bT_1$ is the first component of $\bT$,
and if $\dot v=
\dot w_1\lexp F{\dot w_d}\lexp{F^2}{\dot w_{d-1}}\cdots\lexp{F^{d-1}}{\dot w_2}$, where
$\dot w=(\dot w_1,\ldots,\dot w_d)\in N_\bG(\bT)$ is a representative of $w$.
Since $\tA$ is $wF$-fixed, it is the direct product of its components $\tA_i$.
By assumption there exists $\dot v\in N_{\bG_1}(\bT_1)$ and a lift $\tau$ of
 $\tA_1$ 
to $N_{\bG_1}(\bT_1)^{\dot vF^d}$ such that the lifts of $\tA_1$ are commuting elements 
and have property $(\flat)$. Let $\dot w_i$ be an arbitrary representative
of $w_i$ for $i=2,\ldots,d$, and let $\dot w_1$ be a representative of $w_1$ such that
$\dot v=\dot w_1\lexp F{\dot w_d}\lexp{F^2}{\dot w_{d-1}}\cdots\lexp{F^{d-1}}{\dot w_2}$;
then, for $a=(a_1,\ldots,a_d)\in \tA$, using that $\tau(a_1)$ is in $N_{\bG_1}(\bT_1)^{\dot
vF^d}$,  defining inductively $\tau(a_i):=\lexp{\dot w_iF}\tau(a_{i-1})$ for $i=2,\ldots d$ gives a
lifting $\tau(a)\in N_\bG(\bT)^{\dot w F}$ satisfying $(\flat)$.
By construction $\tau(\tA)$ is a set of commuting elements.
\end{proof}

We will now look at the case of quasi-simple simply connected groups. Note
that we can choose the torus $\bT$ which appears in the beginning of
subsection \ref{case semisimple} such that in the decomposition
$\bG=\prod_i \bH_i$, the automorphism induced by $\varphi$ on the Weyl group
of the torus $\bT_i$ of $\bH_i$
(and then by $\varphi^d$ on the Weyl group of $\bG_1$ of \ref{orbit}) is trivial for
components of type $A$, $\lexp2A$, $B$, $C$, $D$,
$\lexp2D_{2r+1}$, $E_6$, $\lexp 2E_6$ or $E_7$
(we do not have to consider the cases $F_4$, $\lexp 2F_4$, $G_2$, $\lexp 2G_2$ or $E_8$ since for them
$\CA$ is trivial). On type
$D_{2n}$ we will have to consider tori where $\varphi$ induces the automorphisms
$\lexp 2D_{2n}$ or $\lexp 3D_4$.
For type $\lexp 2B_2$ we will see that $\CA^{w\varphi}$ is always trivial.

We will show that for $\bG$ simply connected quasi-simple, and $\bT\subset\bG$
a maximal torus such that $\varphi$ induced by $F$ on
$W=N_\bG(\bT)/\bT$ is such that $(W,\varphi)$ is of one of the types listed
above, we have
\begin{proposition} \label{lifting''}For any $w\in W$ and any subgroup 
$\tA\subset\CA\cap
C_W(w\varphi)$, there is a representative $\dot w$ of $w$ and a lift
 $\tau(\tA)$ of 
$\tA$ to a set of commuting elements of $N_\bG(\bT)^{\dot wF}$
such  that for each  $a\in\tA$ the element $\tau(a)$ has property $(\flat)$.
\end{proposition}
This proposition will be proved type by type in sections \ref{type AF} to \ref{type 2D2r}.

In every case but when $\bG$ is of type $D_{2n}$, the group $\CA$, thus
$\tA$ also, is cyclic. 

Let us explain how we will proceed in the case where $\tA$ is cyclic
with generator $c$ and $\varphi$ is trivial.
We need to lift $c$ to some $\dot wF$-fixed $\tau(c)$.
We will always use as lifts elements of the form $\ts(b)$ for some $b\in B(W)$,
so they will be $F$-fixed, thus the problem will be to find a representative
$\dot w$ commuting to $\ts(b)$.
We then define $\tau(c^i)$ as in Lemma \ref{c^i}, so that $\dot w$ commutes with
$\tau(c^i)$.
\begin{definition}Let $B(W)\xrightarrow\bpi W$ be the natural projection.
We say that $b\in B(W)$ has property (C) if the restriction
$C_{B(W)}(b)\xrightarrow\bpi C_W(\pi(b))$ is surjective.
\end{definition}
Thus if $b$ has property (C), and $\bpi(b)$ commutes with $w$, there exists some representative
$\bw\in B(W)$ of $w$ which commutes with $b$, thus $\dot w:=\ts(\bw)$ commutes
with $\ts(b)$ which solves our problem.

It will be the case sometimes that the $\tau(c)$ of Section \ref{section1}
is of the form $\ts(b)$ with $b$ having property (C), but often not.
We then use the following:
\begin{theorem}\label{condition C}
Let  $w\in W$ be of minimal length in its conjugacy class in $W$.
Then $\bw=\tpi(w)$ has property (C), that is the natural morphism
$C_{B(W)}(\bw)\to C_W(w)$ is surjective .
\end{theorem}
\begin{proof}
Let  $I\subset S$ be the support of $w$, that is the set of elements of $S$
which  appear in a (any) reduced  expression for $w$. Then by \cite[3.1.12,
(iii)]{GP}  the  element  $w$  is  cuspidal  in  $W_I$  (that is, no proper
parabolic  subgroup of $W_I$ contains $w$). It follows
that  $W_I$ is  the unique  minimal parabolic  subgroup containing  $w$ (if
another  parabolic subgroup $P$  with $P\not\supset W_I$  contains $w$ then
the  parabolic subgroup  $P\cap W_I$  would contain  $w$, contradicting the
cuspidality). It follows that $C_W(w)\subset N_W(W_I)$.

Now  $N_W(W_I)=W_I\rtimes N_I$ where $N_I=\{n\in W\mid \lexp nI=I \text{ and $n$
is $I$-reduced}\}$ (see for example \cite[6.1.7]{DM}).

We  can thus describe $C_W(w)$  as follows. Let $N_I^0$  be the subgroup of
$N_I$  formed  of  the  $n\in  N_I$  such  that  $\lexp  nw$  and  $w$  are
$W_I$-conjugate.  If  $n\notin  N_I^0$  then  $nW_I$ contains no element of
$C_W(w)$. Let $n\in N_I^0$; then $\lexp nw$ is equal to some $\lexp pw$ for
some   $p\in  W_I$.  The   intersection  $C_W(w)\cap  nW_I$   is  equal  to
$C_{W_I}(w)p\inv  n$.

Since $\lexp nI=I$, we have $l(\lexp nw)=l(w)$ thus $\lexp pw$ is also
of minimal length in its class in $W_I$.
Using that images by $\tpi$ of elements of
minimal  length in a conjugacy class are  conjugate in the braid group (see
\cite[3.2.7(P2)]{GP}), for any $n\in N_I^0$
we  can  find  $\bp\in B(W_I)$ such that $\tpi(\lexp nw)=\lexp\bp\bw$.

We also need \cite[Theorem 4.2]{HN} which shows that Theorem \ref{condition
 C} holds for cuspidal elements.  Applying this to  $W_I$ we see
that  for  an  element  $c\in  C_{W_I}(w)$  there  is  an  element  $\bc\in
C_{B(W_I)}(\bw)$ such that $\bpi(\bc)=c$.

Now  it is  clear that  every element  of $C_W(w)$  lifts to  an element of
$C_ {B(W)}(\bw)$. Such an  element is  with the  above notation  of the form $c
p\inv  n$  with  $c\in  C_{W_I}(w)$,  $p\in  W_I$ and $n\in N_I^0$.
We use $\bc$ and $\bp$ as above;  by the
 lemma below $\bn=\tpi(n)$ satisfies
$\lexp\bn\bw=\tpi(\lexp  nw)$. It follows  that $\bpi(\bc\bp\inv\bn)= cp\inv
n$ and $\bc\bp\inv\bn\in C_{B(W)}(\bw)$.
\begin{lemma} \label{conjI}
Let $I\subset S$ and let $n\in W$ be an $I$-reduced element such that
 $\lexp nI=I$. Then $\lexp\bn\bI=\bI$, where $\bn=\tpi(n)$.
\end{lemma}
\begin{proof}
The  assumption  is  that  for  $i\in  I$  there  exists $j\in I$ such that
$in=nj$.  Since $n$  is $I$-reduced  the members  of this  equality are two
reduced  expressions  of  the  same  element.  Thus, if $\bi=\tpi(i)$
and $\bj=\tpi(j)$,   the  equality lifts to
$\bi\bn=\bn\bj$, whence the lemma.
\end{proof}
\end{proof}
We use this as follows: 
\begin{lemma} \label{conj back}
Let $c_0$ be a minimal-length conjugate of $c\in\CA$;
let $v\in W$ be such that $c_0=v c v\inv$; let $\bv\in B(W)$ be any lift of $v$; then
\begin{enumerate}
\item $\ts(\bv\inv\bc_0\bv)$ satisfies $(\flat)$
if and only if $\ts(\bc_0)$ satisfies
$$\ts(\bc_0)^{o(c)}=\begin{cases}\iota(c^{o(c)/2})&\text{if $o(c)$ is even},\\
\iota(c^{o(c)})=1&\text{otherwise}.\end{cases} \eqno(\flat')$$ 
\item $\bv\inv\bc_0\bv$ satisfies condition (C).
\end{enumerate}
\end{lemma}
\begin{proof}
\begin{enumerate}
\item
We have
$\ts(\bv\inv\bc_0\bv)^{o(c)}=\ts(\bv)\inv\ts(\bc_0)^{o(c)}\ts(\bv)$; since the image of $\iota$ is central
  this shows that $(\flat)$ for $\ts(\bv\inv\bc_0\bv)$ is equivalent to
  $(\flat')$ for $\ts(\bc_0)$.
\item
 is obvious, since condition (C) is invariant by conjugation in $B(W)$.
\end{enumerate}
\end{proof}
Thus if we can find an element $c_0$ of minimal length such that $\bc_0$ satisfies
$(\flat')$, we are done choosing $\tau(c)=\ts(\bv\inv\bc_0\bv)$ and $\dot w=\ts(\bw)$
where $\bw$ commutes with $\bv\inv\bc_0\bv$. 
This is the canvas of the following proofs of Proposition \ref{lifting''}; we will
have to do something sightly more complicated in type $D_{2n}$.

In the following, for types $\lexp2A_n$, $\lexp2D_{2r+1}$ and $\lexp 2E_6$,
and for the untwisted types, we choose $\bT$ such that $\varphi$ acts trivially
on $W$.
\subsubsection{Type $A_n$ (and $\lexp 2A_n$)}\label{type AF}
The group $\CA$ is cyclic generated by a Coxeter element $c$ thus the subgroup $\tA$
of Proposition \ref{lifting''} is generated
by some power $c^i$ for $i|n+1$, and in \ref{type A} we
have taken $\tau(c^i)=\sigma(c)^i$ or $\tau(c^i)=-\sigma(c)^i$.
In the first case we have $\tau(c^i)=\ts(\bc^i)$. In the second case we have
$\tau(c^i)=\ts(\bc^{i+n})$ since in this case $-1=\ts(\bw_0^2)=\ts(\bc^{n+1})$.
Let us show that $\bc^i$ has property (C).
The element $\bc^i$ is periodic according to \cite[Definition 12.1]{bessis}
thus satisfies (C) by \cite[Theorem 12.4 (iii)]{bessis}. We have
checked in \ref{type A} that $\tau(c^i)$ satisfies $(\flat)$.
\subsubsection{Type $B_n$}
The group $\CA$ is of  order 2 generated by $c=w_0w_I$ with $I:=\{1,\ldots,n-1\}$. Thus the
only non trivial $\tA$ is $\CA$. In
\ref{type B} we have taken $\tau(c)=\ts(\bc)$. The centraliser of $c$
is the centraliser of $w_I$ since $w_0$ is central, hence normalises the
$1$-eigenspace of $w_I$, thus normalises the fixator of this eigenspace,
which is $W_I$. By the results of
\cite{howlett} the normaliser of $W_I$ is generated by some products of
elements of the form $w_Iw_J$ where $J=I\cup\{s\}$ for $s\in S-I$.
There is only one such element, $w_0w_I$. Hence
$C_W(c)=N_W(W_I)=\langle W_I,c\rangle$. The element $\tpi(c)=\tpi(w_0)\tpi(w_I)\inv$ still
centralises $\tpi(W_I)$; indeed $\tpi(w_0)$ is central in $B(W)$ since $w_0$ is
central in $W$ and $l(w_0s)=l(w_0)-1$ for $s\in S$, thus $\tpi(w_0)$ commutes
to $\bS$; and similarly $w_I$ lifts to a central element of $B(W_I)$.
This shows that $\bc$ satisfies property (C).
That $\ts(\bc)$ satisfies $(\flat)$ has been proved in \ref{type B}.
\subsubsection{Type $\lexp 2B_2$}
The group $\CA$ is of order 2 generated by $c=w_0w_{\{1\}}$. We have $\varphi(c)=w_0w_{\{2\}}$,
Since $c$ and $\varphi(c)$ are not conjugate, $\CA^{w\varphi}$ is always trivial
and there is nothing to do.
\subsubsection{Type $C_n$} The group $\CA$ is of order 2 generated by $c=w_0w_I$ with $I=\{2,\ldots,n\}$
and the only non trivial $\tA$ is thus $\CA$. The element
$c_0=s_ns_{n-2}s_{n-4}\cdots$  is conjugate to $c$;  this can be seen using
the  description of the conjugacy classes in $W$ by pairs of partitions (see \cite[Section
2]{stembridge}): in
the hyperoctahedral group, $w_0$ is the product of signed transpositions
$(1,-1)(2,-2)\ldots(n,-n)$  and $w_I$  is $(1,n)(2,n-1)\ldots$,
thus   the   product   is   $(1,-n)(2,-(n-1))\ldots$   of   cycle  type
$(2^{n/2},\emptyset)$  for $n$  even and  $(2^{(n-1)/2},1)$ for  $n$ odd. The
element $c_0$ has clearly the same cycle type. The
element  $c_0$  being  a  Coxeter  element  of  a standard parabolic subgroup is a
shortest  element in its  conjugacy class, see  \cite[Proposition 3.1.6 and
Lemma 3.1.14]{GP}; hence $\tpi(c_0)$ satisfies Property (C).
By Corollary \ref{sigma(involution)} we have
$\ts(\tpi(c_0))^2=(\alpha_n^\vee+\alpha_{n-2}^\vee+\cdots)/2$ which is equal to the
coweight $\varpi_1^\vee$ modulo $Y(\Tsc)$ which is $\iota(c)$ since it is the unique
non trivial element in $(Y(\Tad)/Y(\Tsc))$; hence $\ts(\tpi(c_0))$ satisfies $(\flat')$.

\subsubsection{Type $D_{2r+1}$ (and $\lexp 2D_{2r+1}$)}
The group $\CA$ is cyclic of order 4 generated by $c=w_0w_I$ with $I=\{2,\ldots,2r+1\}$. There
are two non trivial possible subgroups $\tA$, generated respectively by $c$ and $c^2$.
The shortest element in the conjugacy class of $c$ is
the element $c_0=s_1s_2s_3 s_5 s_7 s_9\cdots s_{2r+1}$:
it is conjugate to $c$ as can be seen using the
description of conjugacy classes in $W$ by pairs of partitions:
the element $w_0$ seen in the hyperoctahedral group of type $B_{2r+1}$ is the
product of signed transpositions
$(2,-2)(3,-3)\ldots$ and $w_I$ is $(1,2r+1)(2,2r)\ldots(r,r+2)$, thus
$w_0w_I$ is $(1,-(2r+1),-1,2r+1)(2,-2r)(3,-(2r-1))\ldots (r,-(r+2))(r+1,-(r+1))$ whose cycle
type is $(2^{r-1},(2,1))$, the negative cycles being $(1,-(2r+1),-1,2r+1)$ and $(r+1,-(r+1))$.
The element $c_0$ has clearly the same cycle type.
Being a Coxeter element of a standard parabolic subgroup, 
$c_0$ is a shortest element in its class, thus
 $\tpi(c_0)$ satisfies condition (C).  Now $\tpi(c_0)^4=
(\bs_1\bs_2\bs_3)^4 \bs_5^4\bs_7^4\cdots$, hence by \ref{adams-vogan}
$\ts(\tpi(c_0)^4)=(\alpha_1^\vee+\alpha_2^\vee)/2$ hence $\ts(\tpi(c_0))$
satisfies $(\flat')$ since
$(\alpha_1^\vee+\alpha_2^\vee)/2=\varpi_n^\vee=2\varpi_1^\vee=\iota(c^2)$.

We now consider the case when $\tA$ is  generated by $c^2$. The element $c^2$ is conjugate
to $c_0^2=(s_1s_2s_3)^2$ which is itself  conjugate to $c'_0:=s_1s_2$. Being a Coxeter
element of a standard parabolic subgroup, the element $c'_0$ is a shortest element in
its class, hence $c^2$ satisfies Condition (C). Since
$\ts(\tpi(s_1s_2))^2=(\alpha_1^\vee+\alpha_2^\vee)/2$, the element
$\ts(\tpi(c_0'))$ satisfies $(\flat')$.

\subsubsection{Type $E_6$ (and $\lexp 2E_6$)}
The only non-trivial subgroup $\tA$ of $\CA$ is $\CA$ itself, generated by
$c=w_0w_I$ where $I=\{1,\ldots,5\}$.
We check using \cite{chevie} that a shortest element in the conjugacy class of 
$c$ is $c_0=s_1s_3s_5s_6$:
\begin{verbatim}
julia> W=coxgroup(:E,6);c=longest(W)/longest(W,1:5);

julia> classinfo(W).classwords[position_class(W,c)]
4-element Vector{Int64}:
 1
 3
 5
 6
\end{verbatim}
We have $\tpi(c_0)^3=\tpi(w_J)^2$ where $J=\{1,3,5,6\}$
and $\ts(\tpi(w_J)^2)$ is trivial since it is thus the principal
involution in type $A_2\times A_2$, which is trivial. This shows $(\flat')$.
\subsubsection{Type $E_7$}
The only non-trivial subgroup $\tA$ of $\CA$ is $\CA$ itself, generated by
$c=w_0w_I$ where $I=\{1,\ldots,6\}$. We check using \cite{chevie} that a shortest
element in the conjugacy class of $c$ is $c_0=s_2s_5s_7$:
\begin{verbatim}
julia> W=coxgroup(:E,7);c=longest(W)/longest(W,1:6);

julia> classinfo(W).classwords[position_class(W,c)]
3-element Vector{Int64}:
 7
 5
 2
\end{verbatim}
We have $\tpi(c_0)^2=\tpi(w_J)^2$ where $J=\{2,5,7\}$, thus
$\ts(\tpi(w_J)^2)$ is the principal involution in type 
$A_1\times A_1\times A_1$, thus equal to
$(\alpha_2^\vee+\alpha_5^\vee+\alpha_7^\vee)/2$ which is $\iota(c)$ by
\ref{E7}.

\subsubsection{Type $D_{2r}$}
We first consider the case when $\varphi$ is trivial.
As in section \ref{D even}, let $a=w_0w_{\{2,3,\ldots, 2r\}}$ and
$b=w_0w_{\{1,3,\ldots, 2r\}}$. We show
that we can conjugate simultaneously $a$ and $b$ in $W$ to shortest elements in their
respective conjugacy class. Let $I=\{1,4,6,\ldots 2r\}$ and $J=\{2,4,\ldots,2r\}$.
Seen as products of signed transpositions in the hyperoctahedral group of type $B_{2r}$ we have
$$\begin{aligned}
a&=(1,-2r)(2,-(2r-1))(3,-(2r-2))\cdots(r,-(r+1))\text{ and}\\
b&=(1,\phantom{-}2r)(2,-(2r-1))(3,-(2r-2))\cdots(r,-(r+1)).\end{aligned}$$
If $r$ is even, the element $(2,-2r)(4,-(2r-2))\cdots(r,-(r+2))$ is in $W$ since it has an even
number of sign changes. This element
conjugates $a$ to $(1,2)(3,4)(5,6)\cdots(2r-1,2r)=w_J$ and $b$ to
$(1,-2)(3,4)(5,6)\cdots(2r-1,2r)=w_I$.
If $r$ is odd, the element $(2,2r)(4,-(2r-2))\cdots(r+1,-(r+1))(2r-1,-(2r-1))$ is in $W$ and
conjugates $a$ to $w_I$ and $b$ to $w_J$. 
The elements $w_I$ and $w_J$ being Coxeter elements of standard parabolic subgroups are
shortest elements in their conjugacy classes.
If $r$ is odd we let $a_0=w_I$ and $b_0=w_J$, and if $r$ is even we
let $a_0=w_J$ and $b_0=w_I$, so that in any case $a_0$ is an element of
minimal length conjugate to $a$ and $b_0$ an element of minimal length
conjugate to $b$. Since this is done by a simultaneous conjugation, it
follows also that $c_0=s_1 s_2$ is an element (of minimal length) conjugate
to $c=ab$.

We have to show that for any subgroup $\tA$ of $\CA$, we can lift
$\tA$ to a $\tau(\tA)$ satisfying $(\flat)$ which is the image
by $\ts$ of a set of elements ``satisfying property (C)'' (or an analogous condition
when $\tA=\CA$, see below).

We first deal with cyclic subgroups $\tA$. As in previous cases, we replace $a,b,c$
by elements of minimal length in their class $a_0,b_0,c_0$. We let
$\ba_0=\tpi(a_0)$, $\bb_0=\tpi(b_0)$ and $\bc_0=\tpi(c_0)$.
Using Lemma \ref{conj back} we check that $\ts(\ba_0)$,$\ts(\bb_0)$
and $\ts(\bc_0)$ satisfy property $(\flat')$: indeed by Proposition \ref{adams-vogan}
we have $$\ts(\ba_0)^2=\begin{cases}
1/2\sum_{i\in I}\alpha_i^\vee \text{ if $r$ is odd}\\
 1/2\sum_{i\in J}\alpha_i^\vee \text{ if $r$ is even}
\end{cases}\equiv\varpi_1^\vee\pmod {Y(\bT)},$$
 and 
$$\ts(\bb_0)^2=\begin{cases}
1/2\sum_{i\in J}\alpha_i^\vee \text{ if $r$ is odd}\\
 1/2\sum_{i\in I}\alpha_i^\vee \text{ if $r$ is even}
\end{cases}\equiv\varpi_2^\vee\pmod{Y(\bT)},$$
and $\ts(\bc_0)^2=(\alpha_1^\vee+\alpha_2^\vee)/2\equiv \varpi_n^\vee\pmod{Y(\bT)}$.

It remains the case when $\tA=\CA$.
We will show that the subgroup of $B(W)$ generated by $\ba_0$ and $\bb_0$
satisfies the analogue of property (C), that is the natural map 
$C_{B(W)}(\ba_0)\cap C_{B(W)}(\bb_0)\to C_W(a_0)\cap C_W(b_0)$ is surjective.
Let us first compute $C_W(w_J)$. According to the proof of Theorem
\ref{condition C}, we have $C_W(w_J)\subset W_J\rtimes N_J$. Since $W_J$ is
commutative and $w_J$ is stable by any diagram automorphism, we have actually
$C_W(w_J)=W_J\rtimes N_J$. According to \cite{howlett}, $N_J$ is generated
by the elements $v_i:=w_{J\cup\{i\}}w_J$ for all letters $i\notin J$.
These are $v_1=s_1$ and $v_3=w_{\{2,3,4\}}w_{\{2,4\}}$,
$v_5=w_{\{4,5,6\}}w_{\{4,6\}}$
\etc Thus $C_W(w_J)$ is generated by $s_1,s_2,s_4,s_6,\ldots,s_{2r}$,
which commute to each other, and $v_3$, which swaps $s_2$ and $s_4$, $v_5$
\etc A computation in $\fS_6$ shows that $v_3$ and $v_5$
satisfy the braid relation $v_3 v_5 v_3=v_5 v_3 v_5$, and similarly for
$v_5$ and $v_7$, \etc It follows that the group generated by 
$v_3, v_5,\ldots,v_{2r-1}$ is isomorphic to $\fS_r$. Let us compute
the intersection of $C_W(w_J)$ with $C_W(w_I)$. It
is the same as the intersection with $C_W(s_1 s_2)$ since $w_I=s_1 s_2 w_J$. All generators of
$C_W(w_J)$ commute with $s_1 s_2$ excepted $v_3$. In $\fS_r$ all elements
have an expression in the generators which involves only one time the first
generator. This results from the fact that $\fS_r$ has only two double
cosets with respect to $\fS_{r-1}$ acting on the second to last letter, the
trivial one and that of the first generator. It follows that in $C_W(W_J)$
each element has an expression involving $v_3$ only once. This implies
that $C_W(W_J)\cap C_W(s_1s_2)$ is generated by all generators of $C_W(W_J)$
but $v_3$, that is $s_1, s_2, s_4,\ldots s_{2r}, v_5,\ldots v_{2r-1}$.
These elements all lift to $B(W)$ by $\tpi$ to elements centralising
$\ba_0$ and $\bb_0$ (invoking Lemma \ref{conjI} for the $v_j$).

It follows that if we let $v$ be the element such that $a_0=vav\inv$,
$b_0=vbv\inv$, and let $\bv$ be a lift of $v$, the elements
$\ts(\bv\inv\ba_0\bv)$ and $\ts(\bv\inv\bb_0\bv)$  lift $a$ and $b$,
satisfy $(\flat)$ by Lemma \ref{conj back} and the elements 
$\bv\inv\ba_0\bv$ and $\bv\inv\bb_0\bv$ satisfy the analogue of (C).
We lift $c$ by  $\ts(\bv\inv\ba_0\bb_0\bv)$.

In the next two cases we choose $\bT$ to be quasi-split in $\bG$.
\subsubsection{Type $\lexp 3D_4$}
We claim there is nothing to do since for any $w\in W$ the only
$w\varphi$-fixed element of $\CA$ is the identity. Indeed, $a$, $b$ and $c$
belong to different conjugacy classes of $W$ which are permuted circularly by
$\varphi$.

\subsubsection{Type $\lexp 2D_{2r}$}\label{type 2D2r}

The only element of $\CA$ which may be $w\varphi$-fixed is $c$, since
$a$ and $b$ belong to different conjugacy classes of $W$ exchanged by $\varphi$.
Since $c$ is fixed by $\varphi$, we have to lift $w\varphi$ for $w\in C_W(c)$.
We proceed as in the case $\varphi$ trivial by conjugating $c$ to a minimal
length element in its class. Let $c_0=s_1s_2=\lexp vc$ be such an element.
Let $w'\varphi=\lexp v(w\varphi)$. Since $\bc_0=\tpi(c_0)$ has property (C)
and $c_0$ and $\bc_0$ are $\varphi$-fixed, there exists a lift $\bw'$ of $w'$
such that $\bw'\varphi$ commutes to $\bc_0$.
For any lift $\bv\in B(W)$ of $v$, the conjugate
$\bv\inv\bw'\varphi\bv$ lifts $w\varphi$ and centralises
$\bv\inv\bc_0\bv$, a lift of $c$. Applying $\ts$, we get a lift $\dot wF$ of 
$w\varphi$ centralising a lift to $N_\bG(\bT)$ of $c$.
\bibliography{centraliser}
\bibliographystyle{plain}
\end{document}